\newtheorem{theorem}{Theorem}[section]
\newtheorem{lemma}[theorem]{Lemma}
\newtheorem{proposition}[theorem]{Proposition}
\newtheorem{corollary}[theorem]{Corollary}
\theoremstyle{definition}
\theoremstyle{remark}
\newtheorem{remark}[theorem]{Remark}
\numberwithin{equation}{section}
\begin{document}
\setcounter{page}{1}

\title[Compact and weakly compact multipliers]{Compact and weakly compact multipliers on Fourier algebras of ultraspherical hypergroups}

\author[R. EsmailvandiM. Nemati ]{  Reza Esmailvandi$^1$ and Mehdi Nemati$^2$}
\address{$^{1}$Department of Mathematical Sciences,
     Isfahan Uinversity of Technology, Isfahan 84156-83111, Iran;}
\email{\textcolor[rgb]{0.00,0.00,0.84}
{ r.esmailvandi@math.iut.ac.ir}}
\address{$^{2}$ Department of Mathematical Sciences,
	Isfahan Uinversity of Technology,
	Isfahan 84156-83111, Iran;
	 \newline
	 School of Mathematics,
	 Institute for Research in Fundamental Sciences (IPM),
	 P.O. Box: 19395--5746, Tehran, Iran.}
\email{\textcolor[rgb]{0.00,0.00,0.84}{m.nemati@cc.iut.ac.ir}}



\subjclass[2010]{43A62, 43A22, 46J10, 43A30, 46J20.}

\keywords{Ultraspherical hypergroup, Fourier algebra,  weakly compact multiplier, Arens regularity.}

\begin{abstract}
 A locally compact group $ G $ is discrete if and only if the Fourier algebra $ A(G) $ has a non-zero (weakly) compact multiplier.  We partially extend this result to the setting of ultraspherical hypergroups. Let $H$ be an ultraspherical hypergroup  and let $A(H)$ denote the corresponding Fourier algebra. We will give several characterizations of discreteness of $ H $  in the terms of  the  algebraic properties of $A(H)$.  We also study Arens regularity of  closed ideals of $ A(H)$.
\end{abstract} \maketitle

\section{Introduction}
Let $G$ be a locally compact group. We let $ A(G) $ and $VN(G)$ denote the Fourier  and  group von Neumann algebras  of $G$ which is introduced by Eymard \cite{eym}. It is known from  \cite{lau1979}  by Lau  that the Fourier algebra $ A(G) $ has a non-zero (weakly) compact left multiplier if and  only if $ G $ is discrete. He also proved that for a discrete and amenable group $ G $, $ A(G)$ is precisely the  algebra of  all weakly compact multipliers on $ A(G)$.
In addition, Ghahramani and Lau in \cite{ghl} showed that $G$ is discrete if and only if the second dual of $A(G)$ equipped with the first Arens product has a (weakly) compact left   multiplier $T$ such that $\langle T(n) , \lambda(e)\rangle \neq 0 $ for some $ n \in A(G)^{**} $. 

In recent years, some classes  of hypergroups whose Fourier space forms
a Banach algebra under pointwise multiplication have been discovered \cite{am, mur1, mur2, verm}. Most notably,
this concerns the class of ultraspherical hypergroups,  which includes in particular all double coset hypergroups and
hence all orbit hypergroups. Since ultraspherical hypergroups are generalized versions of locally compact groups, it is natural to ask whether the above results hold in the ultraspherical hypergroup setting. One of the purpose of this paper is to provide an affirmative answer to this question.

In Section 2, after recalling
some background notations and definitions, we discuss some basic properties
of the Fourier algebra of  ultraspherical hypergroups that we shall need in establishing our
main results.

In section 3, we will give some characterizations for discreteness of the ultraspherical hypergroup $ H $  in terms of the existence of  minimal idempotents in $A(H)$. 

In section 4, we study (weakly) compact multipliers of $ A(H) $. As an application of this result, we show  that $ A(H) $  is an ideal in  $ VN(H)^{*} $  if an only if $ H $ is discrete.  We also prove that  $ H $ is discrete if and only if  there is  a weakly compact right (equivalently, left) multiplier $ T $ of $ VN(H)^*$ and $ m\in VN(H)^*$ such that $ T(m)\in A(H) $ and
$ \langle T(m),\lambda(\dot{e})\rangle\neq0 $.  

In the final section we study Arens regularity of  closed ideals of $ A(H)$. Furthermore, we give a characterization of $ H $ to be finite in  terms of weakly compact multipliers of $ A(H) $. 
\section{Preliminaries}
Let $H$ be an ultraspherical hypergroup associated to  a locally compact group
$G$
and a spherical
projector $\pi: C_c(G)\rightarrow C_c(G)$ which was introduced
and studied in \cite{mur2}. Let $A(H)$ denote the Fourier algebra corresponding to the hypergroup
$H$. A left Haar measure on $H$ is given by $\int_{H} f(\dot{x})d\dot{x}=\int_G f(p(x))dx$,  $f\in C_{c}(H)$, where  $p: G \rightarrow H$ is the quotient map. Recall that the Fourier algebra
$A(H)$ is semisimple, regular and Tauberian\cite[Theorem 3.13]{mur2}. As in the group case, let $\lambda$ also denote the left regular representation of $H$ on $L^2(H)$ given by 
$$
\lambda(\dot{x})(f)(\dot{y})=f(\check{\dot{x}}\ast\dot{y})\quad (\dot{x},\dot{y}\in H, f\in L^2(H))
$$
This can be extended to $L^1(H)$ by $\lambda(f)(g)=f*g$ for all $f\in L^1(H)$ and $g\in L^2(H)$. Let $C^*_{\lambda}(H)$ denote the completion of $\lambda(L^1(H))$
in $B(L^2(H))$ which is called the reduced $C^*$-algebra of $H$. The von Neumann algebra generated
by $\{\lambda(\dot{x}): \dot{x}\in H\}$ is called the von Neumann algebra of $H$, and is denoted by $VN(H)$. Note that $VN(H)$ is isometrically isomorphic to the dual of $A(H)$. Moreover, $A(H)$ can be considered as an ideal of $B_\lambda(H)$, where $B_\lambda(H)$ is the dual of $C_\lambda^*(H)$.

A bounded linear operator on  Banach algebra ${\mathcal A}$ is called a right (resp. left) multiplier if it satisfies $ T(ab)= aT(b)$ (resp. $ T(ab) = T(a)b $)
for all $a, b\in{\mathcal A}$. We denote by $RM({\mathcal A})$ (resp.  $LM({\mathcal A})$) the space
of all right (resp. left) multipliers for ${\mathcal A}$. Clearly $ RM({\mathcal A}) $ and $LM({\mathcal A})$ are Banach algebras as  subalgebras of $B({\mathcal A})$, the space of all bounded linear operator on $ {\mathcal A} $. For any $ a \in \mathcal{A} $, let $ \rho_a: \mathcal{A}\longrightarrow \mathcal{A} $ (resp. $ \ell_a: \mathcal{A}\longrightarrow \mathcal{A}$)	be the  multiplication map defined by $ \rho_a (b)= ba $  (resp. $ \ell_a(b)= ab $) for all $ b \in \mathcal{A} $. Then $ \rho_a \in RM(\mathcal{A}) $ and  $ \ell_a \in LM(\mathcal{A}) $.
If $\mathcal A  $ is commutative, then $ RM({\mathcal A}) = LM({\mathcal A})$ and we denote it by $M({\mathcal A})  $.   An element $ a \in\mathcal A $ is called (weakly) completely continuous if $ \rho_a $  is a (weakly) compact operator on  $\mathcal A  $.  For the general theory of multipliers
we refer to Larsen \cite{lar}.

Let $ \mathcal{A} $ be a commutative Banach algebra. The    Arens products on  $ \mathcal{A}^{**} $ is defined as following three steps. For $ u, v$ in $\mathcal{A} $, $ \Phi $ in $ \mathcal{A}^* $ and  $ m,n \in \mathcal{A}^{**}, $ we define 
$ \Phi \cdot u, u \cdot \Phi $, 
$ m\cdot \Phi , \Phi \cdot m \in \mathcal{A}^* $
and
$ m \square n, m \diamondsuit n \in \mathcal{A}^{**} $
as follows:
\begin{align*}
\langle \Phi  \cdot u , v\rangle&=  \langle \Phi  , uv\rangle,\qquad \langle u \cdot \Phi  , v\rangle=  \langle \Phi  , uv\rangle \\ 
\langle m \cdot \Phi  , u \rangle &= \langle m , \Phi  \cdot u \rangle,\quad \langle \Phi  \cdot m  , u\rangle = \langle m , u \cdot \Phi  \rangle\\
\langle m\square n , \Phi   \rangle &= \langle m\ , n \cdot \Phi   \rangle,\quad \langle  m \diamondsuit n  , \Phi \rangle = \langle m , \Phi  \cdot n \rangle.
\end{align*}

$ \mathcal{A} $ is said to be Arens regular if $ \square $ and $ \diamondsuit $ coincide on $ \mathcal{A}^{**} $.
For any $ m \in \mathcal{A}^{**} $ the mapping $ n\mapsto m\square n $ is weak$ ^*$-weak$ ^* $ continuous on $ \mathcal{A}^{**}  $. However, the mapping $ n\mapsto n\square m $ need not to be  weak$ ^*$-weak$ ^* $-continuous.  The left topological center  of  $ \mathcal{A}^{**} $ is defined as 
\begin{align*}
\mathcal{Z}(\mathcal{A}^{**} , \square) = \{&m \in \mathcal{A}^{**} :\text{The mapping}~ n\mapsto n\square m  \\
&\text{is weak}^{*}\text{-weak} ^{*} ~\text{continuous on }  \mathcal{A}^{**} \}.
\end{align*}

A linear functional
$ m\in VN(H)^{*} $
is called a topologically  invariant mean 
on $VN(H)$
if  $\lVert  m \rVert =\langle m,     \lambda({\dot{e}}) \rangle  =1$
and  $ \langle m, u \cdot \Phi \rangle = u(\dot{e}) \langle m,\Phi  \rangle $ 
for every
$ \Phi \in VN(H)  $, 
$ u \in A(H).$
We denote by $TIM(\widehat{H})$ the set of all topologically invariant means on $VN(H)$.  It has been shown by Kumar \cite{kum1} that
$ VN(H) $ always admits a topologically invariant mean.

\section{$ Soc (A(H)) $ and discreteness of $ H $}
In this section, we will give some characterizations of discreteness of an  ultraspherical hypergroup $ H $  in the terms of the  algebraic properties of $A(H)  $. 
Recall that in a commutative Banach algebra $ \mathcal{A}  $ a non-zero element $ e $ satisfying $ e^{2} =e $ and $ e\mathcal{A} = \mathbb{C}e $ is called minimal idempotent.
If $ \mathcal{A} $  has minimal ideals, the smallest ideal containing all of them is called Socle of  $ \mathcal{A} $ and is denoted by Soc$(\mathcal{A})  $. If $ \mathcal{A} $ does not have minimal ideals, we define Soc$(\mathcal{A}) = \{0\} $.

Let $ H $ be a hypergroup. Then the set
$$G(H):= \{ \dot{x}\in H; \delta_{\dot{x}}\ast\delta_{\check{\dot{x}}}=\delta_{\dot{e}}=\delta_{\check{\dot{x}}}\ast\delta_{\dot{x}}\}.$$
is a locally compact group and   is called maximum subgroup of $ H $.  In what follows, $H$ will always be an ultraspherical hypergroup associated to  a locally compact group
$G$ and a spherical
projector $\pi: C_c(G)\rightarrow C_c(G)$.

\begin{proposition}\label{5.5}
	$ H $ is discrete if and only if there exists a minimal idempotent $u\in A(H) $ such that $ u(\dot{a})\neq0 $ for some $ \dot{a} \in G(H) $. In this case $u= {\bf1}_{\dot{a}} $,  where ${\bf1}_{\dot{a}}$ denote the characteristic
	function at $\{\dot{a}\}$ on $H$.
	
\end{proposition}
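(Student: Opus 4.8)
The plan is to first pin down exactly what a minimal idempotent of $A(H)$ looks like, and then to translate the existence of an isolated point lying in $G(H)$ into global discreteness of $H$. Recall that by Theorem 3.13 of \cite{mur2} the algebra $A(H)$ is a semisimple, regular and Tauberian commutative Banach algebra of functions on $H$ with pointwise product, and its characters are the point evaluations at points of $H$. So if $u$ is a minimal idempotent, then $u^{2}=u$ forces $u(\dot{x})^{2}=u(\dot{x})$, i.e. $u$ is $\{0,1\}$-valued; since $u\in A(H)\subseteq C_0(H)$, its support $S=\{u=1\}$ is clopen and compact. The identity $uA(H)=\mathbb{C}u$ combined with regularity then forces $S$ to be a single point: if $S$ contained two distinct points $\dot{p},\dot{q}$, regularity would produce $v\in A(H)$ with $v(\dot{p})=1$, $v(\dot{q})=0$, so that $uv$ is not a scalar multiple of $u=\mathbf{1}_{S}$. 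Hence $S=\{\dot{b}\}$ with $\dot{b}$ isolated and $u=\mathbf{1}_{\dot{b}}$. This description also yields the final assertion of the statement at once: if $u(\dot{a})\neq0$ then $\dot{a}=\dot{b}$, so $u=\mathbf{1}_{\dot{a}}$.

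For the forward implication, suppose $H$ is discrete, so $\{\dot{e}\}$ is open with positive Haar weight $w_{\dot{e}}$ and $\mathbf{1}_{\dot{e}}\in L^2(H)$. The plan is to realize $\mathbf{1}_{\dot{e}}$ as an element of $A(H)$ through its coefficient function. Using $\lambda(\dot{x})f(\dot{y})=f(\check{\dot{x}}\ast\dot{y})$ together with $\delta_{\check{\dot{x}}}\ast\delta_{\dot{e}}=\delta_{\check{\dot{x}}}$, one computes that $\dot{x}\mapsto\langle\lambda(\dot{x})\mathbf{1}_{\dot{e}},\mathbf{1}_{\dot{e}}\rangle$ equals $w_{\dot{e}}\,\mathbf{1}_{\dot{e}}$, whence $\mathbf{1}_{\dot{e}}=w_{\dot{e}}^{-1}\langle\lambda(\cdot)\mathbf{1}_{\dot{e}},\mathbf{1}_{\dot{e}}\rangle\in A(H)$. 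By the first paragraph this is a minimal idempotent, and since $\dot{e}\in G(H)$ and $\mathbf{1}_{\dot{e}}(\dot{e})=1\neq0$, it witnesses the required condition.

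For the reverse implication, let $u$ be a minimal idempotent with $u(\dot{a})\neq0$ for some $\dot{a}\in G(H)$. By the first paragraph $u=\mathbf{1}_{\dot{a}}$ and $\dot{a}$ is isolated. Since $\dot{a}\in G(H)$, left translation by $\check{\dot{a}}$ is a homeomorphism of $H$ sending $\dot{a}$ to $\dot{e}$, so $\{\dot{e}\}$ is open. It then remains to show that $\{\dot{e}\}$ open forces $H$ to be discrete. For each $\dot{x}\in H$ consider the translate of $\mathbf{1}_{\dot{e}}\in C_c(H)$, namely the continuous function $\dot{y}\mapsto(\delta_{\dot{x}}\ast\delta_{\dot{y}})(\{\dot{e}\})$; by the support axiom this is strictly positive precisely when $\dot{e}\in\mathrm{supp}(\delta_{\dot{x}}\ast\delta_{\dot{y}})$, i.e. exactly when $\dot{y}=\check{\dot{x}}$. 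Thus $\{\check{\dot{x}}\}$ is open for every $\dot{x}$, and since the involution is a bijection, every point of $H$ is isolated, so $H$ is discrete.

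I expect the routine part to be the Banach-algebra step identifying the minimal idempotent with $\mathbf{1}_{\dot{a}}$, which is standard Gelfand theory for regular semisimple algebras. The genuinely hypergroup-theoretic core, and the main obstacle, is the passage from a single isolated point to global discreteness: this relies on two structural facts about $H$, namely that translation by an element of $G(H)$ is a self-homeomorphism (used to move $\dot{a}$ to $\dot{e}$) and the support axiom $\dot{e}\in\mathrm{supp}(\delta_{\dot{x}}\ast\delta_{\dot{y}})\iff\dot{y}=\check{\dot{x}}$ (used to propagate isolation to all of $H$). The hypothesis $\dot{a}\in G(H)$ is exactly what makes the first of these available, which is why the membership in the maximum subgroup cannot be dropped.
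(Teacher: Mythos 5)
Your proof is correct, and while it reaches the same intermediate milestone as the paper (namely that any minimal idempotent not vanishing on $G(H)$ must equal $\mathbf{1}_{\dot{a}}$), the mechanics differ at every step, and your version is actually more complete where it matters most. For the identification of the idempotent, the paper invokes \cite[Proposition IV.31.3]{bon} to produce a functional $\Phi\in A(H)^{*}$ with $vu=\Phi(v)u$, checks that $\Phi$ is multiplicative, identifies it with a point evaluation $\Phi_{\dot{x}}$ via \cite[Theorem 3.13]{mur2}, and then uses separation of points to conclude $u=\mathbf{1}_{\dot{x}}$; you instead read off from pointwise idempotency that $u$ is the indicator of a clopen set and use regularity to force that set to be a singleton. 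Your route is more elementary (no abstract minimal-idempotent machinery) and yields the final assertion $u=\mathbf{1}_{\dot{a}}$ immediately. In the forward direction the paper simply declares that $u=\mathbf{1}_{\dot{e}}$ works, whereas you verify membership of $\mathbf{1}_{\dot{e}}$ in $A(H)$ by the coefficient-function computation. Most significantly, the paper ends its converse with the bare assertion ``$u=\mathbf{1}_{\dot{a}}$ and $H$ must be discrete,'' leaving implicit exactly the hypergroup-theoretic content you spell out: translation by $\check{\dot{a}}\in G(H)$ is a homeomorphism carrying $\dot{a}$ to $\dot{e}$, and the support axiom then propagates the isolation of $\dot{e}$ to every point of $H$. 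This step genuinely requires $\dot{a}\in G(H)$ --- an isolated point outside the maximum subgroup does not force discreteness, as countable compact hypergroups on $\{0\}\cup\{1/n: n\in\mathbb{N}\}$ show --- so your explicit treatment closes a real gap in the paper's write-up rather than merely adding detail.
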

\begin{proof}
	If $ H $ is discrete, then it
	suffices to take $ u=  {\bf1}_{\dot{e}}. $ 
	Conversely, let $ u $ be a minimal idempotent in $ A(H) $   $u\in A(H) $ such that $ u(\dot{a})\neq0 $ for some $ \dot{a} \in G(H) $. Then by \cite[Proposition IV.31.3]{bon}  and using the commutativity of $ A(H) $,   there is $ \Phi  \in A(H)^{*} $ such that $ vu = \Phi (v)u $ for all $ v \in A(H). $
	Now, we show that $ \Phi =\Phi_{\dot{x}} $ for some $ \dot{x}\in H$, where $\Phi_{\dot{x}}(v)=v(\dot{x})$ for all $v\in A(H)$.
To prove this, note that for each $v, w \in A(H) $, we have
	$$ \Phi (vw)u= (vw)u= (vu)(wu)=\Phi (v)\Phi (w)u,$$
	which implies that $ \Phi$ is a multiplicative functional.  Hence by \cite[ Theorem 3.13]{mur2} there is $ \dot{x}\in H $ such that $ \Phi =\Phi_{\dot{x}}$. It follows that $ vu= v(\dot{x})u $ for all $v \in A(H)$ and in particular $ u(\dot{x})=1$.  Now, we show that $ u={\bf 1}_{\dot{x}}. $  Let $ \dot{t}\in H-\{\dot{x}\} $ and choose $ v\in A(H) $ such that $ v(\dot{x}) \neq v(\dot{t}). $ Then we have
	$$v(\dot{x})u(\dot{t})=vu(\dot{t})=v(\dot{t})u(\dot{t}),$$
	which implies that $ u(\dot{t})=0. $ Since $ u(\dot{a})\neq0, $   it follows that $ \dot{x}=\dot{a} $. Therefore,  $ u={\bf 1}_{\dot{a}} $ and $ H $ must be discrete.   
\end{proof}

For an  ideal $ I $  of $ A(H) $, we denote by  $  Z(I) $ the set of all $ \dot{x} \in H $ such that $ v(\dot{x})=0 $ for all $ v \in I $. 
\begin{corollary}\label{5.6a}
	There is a minimal ideal $ \mathfrak{m} $ in $ A(H) $  with $ G(H)\nsubseteq Z(\mathfrak{m}) $ if and only if $ H $ is discrete and $  \mathfrak{m} = \mathbb{C}{\bf1}_{\dot{a}} $ for some $ \dot{a}\in G(H) . $ 
\end{corollary}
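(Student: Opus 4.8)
The plan is to derive this corollary directly from Proposition~\ref{5.5}, since the notions of ``minimal idempotent'' and ``minimal ideal'' are closely linked in a commutative semisimple Banach algebra. First I would prove the forward direction. Suppose $\mathfrak{m}$ is a minimal ideal of $A(H)$ with $G(H)\nsubseteq Z(\mathfrak{m})$; the latter condition means there exist $v\in\mathfrak{m}$ and $\dot{a}\in G(H)$ with $v(\dot{a})\neq0$. The key step is to produce a minimal idempotent inside $\mathfrak{m}$. Since $A(H)$ is semisimple, a minimal ideal cannot be a radical ideal, so $\mathfrak{m}$ is not nilpotent; by the standard structure theory of minimal ideals in semisimple Banach algebras (see \cite[Proposition~IV.31.3]{bon}, the same reference used in the proof of Proposition~\ref{5.5}), $\mathfrak{m}$ contains a minimal idempotent $u$ and in fact $\mathfrak{m}=uA(H)=\mathbb{C}u$. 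I would then check that $u(\dot{a})\neq0$: because $v\in\mathfrak{m}=\mathbb{C}u$ we have $v=\alpha u$ for some scalar $\alpha$, and $v(\dot{a})\neq0$ forces $u(\dot{a})\neq0$. Now Proposition~\ref{5.5} applies and yields that $H$ is discrete with $u=\mathbf{1}_{\dot{a}}$, whence $\mathfrak{m}=\mathbb{C}u=\mathbb{C}\mathbf{1}_{\dot{a}}$.

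For the converse direction, I would assume $H$ is discrete and set $\mathfrak{m}=\mathbb{C}\mathbf{1}_{\dot{a}}$ for some $\dot{a}\in G(H)$. The task is to verify that this is genuinely a minimal ideal of $A(H)$ and that $G(H)\nsubseteq Z(\mathfrak{m})$. The latter is immediate: $\mathbf{1}_{\dot{a}}(\dot{a})=1\neq0$, so $\dot{a}\in G(H)\setminus Z(\mathfrak{m})$. To see that $\mathfrak{m}$ is an ideal, I would show $v\,\mathbf{1}_{\dot{a}}=v(\dot{a})\mathbf{1}_{\dot{a}}\in\mathbb{C}\mathbf{1}_{\dot{a}}$ for every $v\in A(H)$, using that pointwise multiplication by the characteristic function of a single point in a discrete hypergroup just evaluates $v$ at that point. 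Minimality then follows since $\mathfrak{m}$ is one-dimensional and hence contains no proper nonzero ideal. I would also confirm $\mathbf{1}_{\dot{a}}\in A(H)$, which holds because $H$ is discrete (so points have positive Haar measure and characteristic functions of singletons lie in $A(H)$, being in $L^2(H)$-based constructions available in the discrete setting).

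The main obstacle, and the step requiring the most care, is the extraction of a minimal idempotent from the minimal ideal $\mathfrak{m}$ in the forward direction, together with the identification $\mathfrak{m}=\mathbb{C}u$. This is where one genuinely uses that $A(H)$ is semisimple (so that no minimal ideal is contained in the radical) to invoke the Bonsall--Duncan structure result \cite[Proposition~IV.31.3]{bon}; the commutativity of $A(H)$ is what collapses the two-sided ideal structure and guarantees the idempotent generates a one-dimensional ideal. Everything else is a routine translation through Proposition~\ref{5.5}, and I would keep the write-up short by citing that proposition for the hard analytic content (the passage from a nonvanishing minimal idempotent to discreteness and the explicit form $u=\mathbf{1}_{\dot{a}}$) rather than reproving it.
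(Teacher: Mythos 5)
Your proposal is correct and follows essentially the same route as the paper: extract a minimal idempotent $u$ with $\mathfrak{m}=uA(H)$ via the Bonsall--Duncan structure theory for minimal ideals in a semisimple commutative algebra, observe that $G(H)\nsubseteq Z(\mathfrak{m})$ forces $u(\dot{a})\neq0$ for some $\dot{a}\in G(H)$, and then invoke Proposition~\ref{5.5}; the converse is the same explicit one-dimensional ideal construction (the paper simply takes $\mathbb{C}\mathbf{1}_{\dot{e}}$). Your write-up is in fact slightly more careful than the paper's, making explicit the step $v=\alpha u\Rightarrow u(\dot{a})\neq0$ and the verification that $\mathbb{C}\mathbf{1}_{\dot{a}}$ is an ideal, which the paper leaves implicit.
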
     
\begin{proof}
	Let $ \mathfrak{m} $ be a minimal ideal in $ A(H)$ such that $ G(H)\nsubseteq Z(\mathfrak{m}) $. Then by  \cite[Proposition IV.30.6]{bon} there is a minimal idempotent $  u $ in $ A(H) $ such that $ \mathfrak{m}= uA(H)$. Since $ G(H)\nsubseteq Z(\mathfrak{m}) $, it follows from Proposition \ref{5.5} that $ H $ is discrete and $ u={\bf 1}_{\dot{a}}$ for some $\dot{a}\in G(H)$. Thus, $ \mathfrak{m}= \mathbb{C}{\bf 1}_{\dot{a}}$.
	Conversely, if $ H $ is discrete, then $  \mathfrak{m} = \mathbb{C}{\bf1}_{\dot{e}} $ is the desired minimal ideal in $ A(H). $
\end{proof}
\begin{proposition}\label{3.1}
	The following conditions are equivalent.
	
	{\rm(i)} $ H $ is discrete.
	
	{\rm(ii)} There is  $ u \in Soc(A(H))$ such that $ u(G(H))\neq\{0\} $.
	
	{\rm(iii)} There is  $u\in Soc(B_\lambda(H))$ such that $ u(G(H))\neq\{0\}  $.
	
\end{proposition}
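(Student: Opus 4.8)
The plan is to prove the two equivalences (i) $\Leftrightarrow$ (ii) and (i) $\Leftrightarrow$ (iii) separately, in each case exploiting the explicit description of minimal idempotents supplied by Proposition \ref{5.5}. The forward implications are the easy ones and just require producing a witness. If $H$ is discrete, then ${\bf 1}_{\dot{e}}\in A(H)$, and by Proposition \ref{5.5} it is a minimal idempotent of $A(H)$, so it lies in $Soc(A(H))$; since ${\bf 1}_{\dot{e}}(\dot{e})=1$ and $\dot{e}\in G(H)$, this gives (ii). For (iii) I would verify that the \emph{same} element is a minimal idempotent in the larger algebra: because $A(H)$ is an ideal of $B_\lambda(H)$ and the product is pointwise, for every $v\in B_\lambda(H)$ one has $v\,{\bf 1}_{\dot{e}}=v(\dot{e})\,{\bf 1}_{\dot{e}}$, so ${\bf 1}_{\dot{e}}\,B_\lambda(H)=\mathbb{C}{\bf 1}_{\dot{e}}$ and hence ${\bf 1}_{\dot{e}}\in Soc(B_\lambda(H))$ with ${\bf 1}_{\dot{e}}(\dot{e})\neq 0$.

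For (ii) $\Rightarrow$ (i) I would use that the socle is the algebraic sum of the minimal ideals, and that by \cite[Proposition IV.30.6]{bon} every minimal ideal of the commutative semisimple algebra $A(H)$ has the form $\mathbb{C}e$ for a minimal idempotent $e$. Thus I can write $u=\sum_{i=1}^{n}c_i e_i$ with each $e_i$ a minimal idempotent of $A(H)$. From $u(\dot{a})\neq 0$ for some $\dot{a}\in G(H)$ it follows that $e_j(\dot{a})\neq 0$ for at least one index $j$, and Proposition \ref{5.5} applied to $e_j$ yields discreteness of $H$. The very same decomposition reduces (iii) $\Rightarrow$ (i) to a single statement: a minimal idempotent $e\in B_\lambda(H)$ with $e(\dot{a})\neq 0$ for some $\dot{a}\in G(H)$ forces $H$ to be discrete.

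This reduction is the heart of the argument. From $e\,B_\lambda(H)=\mathbb{C}e$ I obtain, for each $v\in B_\lambda(H)$, a scalar $\Phi(v)$ with $ve=\Phi(v)e$; evaluating at $\dot{a}$ and using $e(\dot{a})\neq 0$ identifies $\Phi(v)=v(\dot{a})$, so that $ve=v(\dot{a})e$ for all $v\in B_\lambda(H)$. Now I bring in the ideal structure together with the regularity of $A(H)$ from \cite[Theorem 3.13]{mur2}: choosing $w\in A(H)$ with $w(\dot{a})\neq 0$, the element $we$ lies in $A(H)$ and equals $w(\dot{a})e$, whence $e=w(\dot{a})^{-1}we\in A(H)$. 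Restricting $ve=v(\dot{a})e$ to $v\in A(H)$ then gives $eA(H)=\mathbb{C}e$, so $e$ is in fact a minimal idempotent of $A(H)$ with $e(\dot{a})\neq 0$ and $\dot{a}\in G(H)$, and Proposition \ref{5.5} completes the proof.

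I expect the main obstacle to be precisely this last transfer of minimality from $B_\lambda(H)$ down to $A(H)$. Abstract socle theory does not by itself place $e$ inside $A(H)$; what makes it work is the combination of the pointwise evaluation identity $\Phi=\Phi_{\dot{a}}$, the fact that $A(H)$ is an ideal of $B_\lambda(H)$, and the existence of a function in $A(H)$ not vanishing at $\dot{a}$. Once $e$ is shown to belong to $A(H)$, everything collapses back onto Proposition \ref{5.5}.
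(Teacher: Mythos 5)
Your proof is correct, and the (i)$\Leftrightarrow$(ii) half follows the paper's own route in inlined form: you decompose a socle element as a finite linear combination of minimal idempotents via \cite[Proposition IV.30.6]{bon} and feed the idempotent that survives at $\dot{a}$ into Proposition \ref{5.5}, which is precisely the content of Corollary \ref{5.6a} that the paper quotes. Where you genuinely diverge is condition (iii). The paper disposes of (ii)$\Leftrightarrow$(iii) in one line by citing \cite[Proposition 1]{gha}: since $A(H)$ is a point-separating ideal in $B_\lambda(H)$, the two socles coincide as sets, $Soc(A(H))=Soc(B_\lambda(H))$. You instead prove the needed transfer by hand: from $eB_\lambda(H)=\mathbb{C}e$ and $e(\dot{a})\neq0$ you identify the associated functional as evaluation at $\dot{a}$, then combine the ideal property with regularity of $A(H)$ (a $w\in A(H)$ with $w(\dot{a})\neq0$) to get $e=w(\dot{a})^{-1}we\in A(H)$, so that $e$ is a minimal idempotent of $A(H)$ and Proposition \ref{5.5} finishes; conversely you verify directly that ${\bf 1}_{\dot{e}}$ is a minimal idempotent of $B_\lambda(H)$. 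Your route is self-contained --- in effect you reprove exactly the special case of the Ghaffari--Medghalchi result that this proposition needs --- whereas the paper's citation is shorter and yields the stronger conclusion that the socles are literally equal. Two small points you should make explicit: decomposing an element of $Soc(B_\lambda(H))$ into minimal idempotents again invokes \cite[Proposition IV.30.6]{bon}, which requires $B_\lambda(H)$ to be semiprime --- automatic here, since $B_\lambda(H)$ is an algebra of functions on $H$ under pointwise multiplication and so has no nonzero nilpotents --- and your identity $v\,{\bf 1}_{\dot{e}}=v(\dot{e})\,{\bf 1}_{\dot{e}}$ for $v\in B_\lambda(H)$ rests on that same pointwise realization.
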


\begin{proof}
	(i)$ \Leftrightarrow $(ii). By Corollary \ref{5.6a}, discreteness of $ H $ is equivalent to the existence of a minimal ideal $ \mathfrak{m} $ in $ A(H)$ with  $ G(H)\nsubseteq Z(\mathfrak{m}) $,   which is equivalent to the existence of $ u \in Soc(A(H)) $ with $ u(G(H))\neq\{0\} $.
	
	(ii)$ \Leftrightarrow $(iii). Since  $ A(H) $ is an ideal in $  B_\lambda(H)$ and separates the points of $ H $, it follows from  \cite[Proposition 1]{gha} that $Soc(A(H))= Soc(B_\lambda(H))$.
\end{proof} 

\begin{proposition}
	Suppose that $ A(H) $ admits a bounded approximate identity.  Then $ H $ is finite if and only if  $Soc(A(H)^{**})= A(H)^{**}$. 
\end{proposition}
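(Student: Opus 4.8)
The plan is to treat the two implications separately: the forward one is essentially a dimension count, while the converse rests on the Wedderburn structure theory of semisimple Banach algebras.

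For the forward implication, suppose $H$ is finite. Then $L^2(H)$ is finite-dimensional, hence so are $VN(H)\subseteq B(L^2(H))$ and its predual $A(H)$. Being finite-dimensional, $A(H)$ is reflexive, so $A(H)^{**}=A(H)$ and both Arens products collapse to the original commutative multiplication. Since $A(H)$ is semisimple by \cite[Theorem 3.13]{mur2} and finite-dimensional, it decomposes as a finite direct sum of its one-dimensional minimal ideals $\mathbb{C}{\bf1}_{\dot x}$, $\dot x\in H$ (these are exactly the minimal idempotents identified in Proposition \ref{5.5}). Consequently $Soc(A(H)^{**})=Soc(A(H))=A(H)=A(H)^{**}$. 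Note that the bounded approximate identity plays no role here, since a finite hypergroup already yields a unital $A(H)$.

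For the converse, assume $A(H)$ has a bounded approximate identity $(e_\alpha)$ and that $Soc(A(H)^{**})=A(H)^{**}$, where $A(H)^{**}$ carries the first Arens product $\square$. Passing to a weak$^*$-cluster point $E$ of $(e_\alpha)$ furnishes a (one-sided) mixed identity for $(A(H)^{**},\square)$. Because $E\in A(H)^{**}=Soc(A(H)^{**})$, the element $E$ lies in the algebraic sum of finitely many minimal one-sided ideals, say $E\in\mathfrak{m}_1+\cdots+\mathfrak{m}_n$; choosing the minimal ideals on the side matching that on which $E$ acts as an identity, one gets for arbitrary $m\in A(H)^{**}$ that $m=E\square m$ (or $m\square E$) lies in $\sum_i\mathfrak{m}_i$, since each $\mathfrak{m}_i$ is an ideal on that side. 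Hence $A(H)^{**}$ is itself a finite sum of minimal one-sided ideals, so $A(H)^{**}$ is semisimple as a module over itself, i.e.\ a semisimple Artinian ring. By the Wedderburn structure theorem it is a finite product of full matrix algebras over division algebras, and each such division algebra is a corner $e\,A(H)^{**}e$ of a Banach algebra, hence equals $\mathbb{C}$ by the Gelfand--Mazur theorem. Therefore $A(H)^{**}$ is finite-dimensional. Since $A(H)$ embeds isometrically in $A(H)^{**}$, it too is finite-dimensional, and as its character space is $H$ (again \cite[Theorem 3.13]{mur2}) a finite-dimensional semisimple commutative Banach algebra has finite spectrum, so $H$ is finite.

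The step I expect to be the main obstacle is the passage through the bidual $A(H)^{**}$. One must be careful that the approximate identity only supplies a \emph{one-sided} mixed identity for $\square$ and that $A(H)^{**}$ need not be commutative, so the socle must be read as the sum of minimal one-sided ideals; the reduction to the finite sum $\mathfrak{m}_1+\cdots+\mathfrak{m}_n$ has to use ideals on the correct side, and verifying that $m\square\mathfrak{m}_i\subseteq\mathfrak{m}_i$ under $\square$ (so that the ideal property is applied correctly for this Arens product) is the delicate point. A viable alternative, which keeps everything commutative, is to first argue that $Soc(A(H)^{**})=A(H)^{**}$ forces $A(H)=Soc(A(H))$, i.e.\ that every element of $A(H)$ is a finite combination of the point masses ${\bf1}_{\dot x}$; then $H$ is discrete by Proposition \ref{3.1}, and the boundedness of the approximate identity is used to bound $\|\sum_{\dot x\in F}{\bf1}_{\dot x}\|$ over finite $F\subseteq H$, which is what rules out an infinite $H$.
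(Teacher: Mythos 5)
Your argument is correct in outline but takes a genuinely different route from the paper's. The paper's proof is two lines: the forward direction is the same dimension count you give (finite $H$ makes $A(H)$ finite dimensional, hence reflexive, so $Soc(A(H)^{**})=Soc(A(H))=A(H)=A(H)^{**}$), while the converse is disposed of by citing Theorem 1 of Ghaffari--Medghalchi \cite{gha}, which states precisely that a semisimple Banach algebra with a bounded approximate identity whose bidual coincides with its socle must be finite dimensional. You instead reprove that cited theorem in the present setting: a weak$^*$ cluster point $E$ of the approximate identity is a right identity for $(A(H)^{**},\square)$ (with the paper's conventions, $m\square E=m$); since $E$ lies in a finite sum $\mathfrak{m}_1+\cdots+\mathfrak{m}_n$ of minimal left ideals, $m=m\square E$ forces $A(H)^{**}=\mathfrak{m}_1+\cdots+\mathfrak{m}_n$; structure theory plus Gelfand--Mazur then gives finite dimensionality of $A(H)^{**}$, hence of $A(H)$, hence finiteness of $H$ because the character space of $A(H)$ is $H$ \cite{mur2}. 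Your approach buys self-containedness, at the cost of redoing the work of \cite{gha}; and your bookkeeping on sides is right, since a right identity must be paired with minimal \emph{left} ideals for $m\square\mathfrak{m}_i\subseteq\mathfrak{m}_i$ to apply.

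The one step that needs repair is the jump from ``finite sum of minimal left ideals'' to ``semisimple Artinian ring, apply Wedderburn.'' The algebra $(A(H)^{**},\square)$ is not known to be unital or semiprime, so (a) a minimal left ideal $\mathfrak{m}_i$ need not be a simple module, since $A(H)^{**}\square\,\mathfrak{m}_i=0$ is possible a priori, and (b) the Wedderburn--Artin theorem in the form you invoke is a statement about unital rings and does not apply verbatim to a ring with only a right identity. Both defects are curable using $E$: any summand with $A(H)^{**}\square\,\mathfrak{m}_i=0$ contributes nothing to $m=m\square E$, so $A(H)^{**}$ is a finite (after refinement, direct) sum of simple left modules; the Jacobson radical annihilates every simple module, hence annihilates $A(H)^{**}$ on the left, and then $x=x\square E=0$ for every $x$ in the radical; thus $A(H)^{**}$ is semiprime and of finite length as a left module over itself, and a semiprime left Artinian ring is automatically a unital semisimple ring, at which point Wedderburn and Gelfand--Mazur finish exactly as you intend. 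So the gap is fixable, but as written the appeal to Wedderburn is not yet justified. Finally, the ``viable alternative'' in your closing sentence --- deducing $A(H)=Soc(A(H))$ directly from $Soc(A(H)^{**})=A(H)^{**}$ --- is asserted without any argument and should not be relied on; the natural way to get that implication is via finite dimensionality, which is the very thing being proved.
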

\begin{proof}
	If $ H $ is finite, then $ A(H) $ is finite dimensional. Therefore,
	$$ Soc(A(H)^{**}) =Soc(A(H)) = A(H)= A(H)^{**}.$$
	The converse is an immediate consequence of  \cite[Theorem 1]{gha}. 
\end{proof}

\section{ Weakly compact multipliers of $ A(H) $} 
In the group setting Lau in \cite{lau1979}  proved that a locally compact group $ G $ is discrete if and only if $ A(G) $ admits (weakly) completely continuous elements. Among other things, we extend this result to the ultraspherical hypergroup setting.
The proof of the following lemma is an adaptation of
the proof given in \cite[ Lemma 4.7]{lau1981}.

\begin{lemma}\label{4.7} 
	Let $ H $ be a non-discrete  ultraspherical hypergroup. Then $ \lambda(\dot{x}) \notin C^{*}_{\lambda}(H) $ for all $ \dot{x} \in G(H). $
\end{lemma}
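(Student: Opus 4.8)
The plan is to argue by contradiction: suppose $\lambda(\dot{x}) \in C^*_\lambda(H)$ for some $\dot{x} \in G(H)$, and derive that $H$ must be discrete. The key observation is that elements of $G(H)$ behave like genuine group elements, so $\lambda(\dot{x})$ is a unitary (or at least invertible isometry) on $L^2(H)$, and $\lambda(\check{\dot{x}})$ serves as its inverse. Since $G(H)$ is a group and $C^*_\lambda(H)$ is a $*$-algebra, membership of $\lambda(\dot{x})$ would propagate: in particular, taking $\dot{x} = \dot{e}$ is the crucial case, since $\lambda(\dot{e})$ is the identity operator on $L^2(H)$. The heart of the matter is to show that $\lambda(\dot{e}) \in C^*_\lambda(H)$ forces discreteness, because $C^*_\lambda(H)$ is the completion of $\lambda(L^1(H))$ in $B(L^2(H))$, and containing the identity operator is a strong constraint.

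The concrete mechanism I would use follows the approximation argument from \cite[Lemma 4.7]{lau1981}. If $\lambda(\dot{x}) \in C^*_\lambda(H)$, then there exists $f \in L^1(H)$ with $\|\lambda(\dot{x}) - \lambda(f)\| < 1/2$ in the operator norm on $B(L^2(H))$. I would then test this inequality against suitable functions in $L^2(H)$. For $\dot{x} \in G(H)$, the convolution $\lambda(\dot{x})g(\dot{y}) = g(\check{\dot{x}} \ast \dot{y})$ is essentially a translation by a group element, whereas $\lambda(f)g = f \ast g$ is a genuine smoothing convolution. First I would translate the problem so that $\dot{x}$ is effectively the identity, or work directly with the translate: the point is that $\lambda(\dot{x})$ preserves the pointwise/support structure of $g$ (it translates supports by a fixed group element), while $\lambda(f)$ spreads mass out according to the (non-atomic, since $H$ is non-discrete) measure $f\,d\dot{y}$. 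I would choose a normalized indicator-type function $g_U = |U|^{-1/2} {\bf 1}_U$ supported on a small neighborhood $U$ of $\dot{e}$ in $H$, with $\|g_U\|_2 = 1$, and estimate $\|\lambda(\dot{x})g_U - \lambda(f)g_U\|_2$ from below.

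The mechanism is that $\lambda(\dot{x})g_U$ is again (up to the group translation by $\dot{x}$) a unit vector concentrated on a translate of $U$, while $\lambda(f)g_U = f \ast g_U$ has $L^2$-norm controlled by $\|f\|_1 \cdot \|g_U\|_2$ but, more importantly, spreads $f$ over the shrinking set $U$. Because $H$ is non-discrete, the Haar measure of $U$ can be taken arbitrarily small, and then $f \ast g_U$ behaves like $|U|^{1/2}$ times a local average of $f$, which tends to $0$ in a way incompatible with approximating a unit vector (the translate of $g_U$). Making this precise, one shows $\|\lambda(f)g_U\|_2 \to 0$ as $|U| \to 0$ for any fixed $f \in L^1(H)$, while $\|\lambda(\dot{x})g_U\|_2 = 1$ for all $U$; hence $\|\lambda(\dot{x})g_U - \lambda(f)g_U\|_2 \to 1$, contradicting the bound $< 1/2$. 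This forces $H$ to be discrete.

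The main obstacle I anticipate is handling the hypergroup convolution $\check{\dot{x}} \ast \dot{y}$ for $\dot{x} \in G(H)$ carefully enough to justify that $\lambda(\dot{x})$ really acts as a measure-preserving translation that keeps $\|\lambda(\dot{x})g_U\|_2 = 1$ and keeps the support localized; this uses precisely the defining property $\delta_{\dot{x}} \ast \delta_{\check{\dot{x}}} = \delta_{\dot{e}} = \delta_{\check{\dot{x}}} \ast \delta_{\dot{x}}$ of the maximal subgroup $G(H)$, which guarantees $\lambda(\dot{x})$ is unitary. The secondary technical point is the estimate $\|f \ast g_U\|_2 \to 0$, for which I would first reduce to $f \in C_c(H)$ by density in $L^1(H)$ (splitting $f = f_0 + r$ with $\|r\|_1$ small and $f_0$ continuous compactly supported), and then use uniform continuity of $f_0$ together with the non-discreteness of $H$ to shrink $|U|$; the continuity-of-translation estimate in the hypergroup setting is where the adaptation of Lau's group argument requires the most care.
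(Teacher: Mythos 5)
Your plan is sound, and it follows a genuinely different route from the paper. The paper's proof is soft: it quotes Corollary 4.4 of its companion paper \cite{esn1} to conclude that $\lambda(\check{\dot x})\notin C^*_\lambda(H)$ once $H$ is non-discrete and $\lambda(\dot x)\in C^*_\lambda(H)$, separates $\lambda(\check{\dot x})$ from $C^*_\lambda(H)$ by Hahn--Banach, and then transports the separating functional through the involution (via $\langle n,\Phi\rangle=\overline{\langle m,\Phi^*\rangle}$, using $\lambda(\dot x)^*=\lambda(\check{\dot x})$ and the $*$-closedness of $C^*_\lambda(H)$) to contradict $\lambda(\dot x)\in C^*_\lambda(H)$. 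Your argument is instead the direct concentration estimate from the group case: approximate $\lambda(\dot x)$ within $1/2$ by $\lambda(f)$, $f\in L^1(H)$, and test against $g_U=|U|^{-1/2}\mathbf{1}_U$. What the paper buys is brevity, at the price of outsourcing the analytic content to \cite{esn1}; what you buy is a self-contained proof that never invokes that external corollary and in effect reproves it. (Incidentally, your passing observation that $C^*_\lambda(H)$ is a $*$-algebra already shortens the paper's own argument: granted \cite[Cor.\ 4.4]{esn1}, $\lambda(\dot x)\in C^*_\lambda(H)$ gives $\lambda(\check{\dot x})=\lambda(\dot x)^*\in C^*_\lambda(H)$ at once, with no need for Hahn--Banach.)

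Three points in your sketch need to be made precise, and one of your suggested tools is misdirected. (a) Unitarity of $\lambda(\dot x)$ for $\dot x\in G(H)$ is fine: $\lambda(\dot x)\lambda(\check{\dot x})=\lambda(\delta_{\dot x}*\delta_{\check{\dot x}})=\lambda(\dot e)=I$ and hypergroup translations are $L^2$-contractions, so $\|\lambda(\dot x)g_U\|_2=1$. (b) You need non-discreteness to produce neighborhoods $U$ of $\dot e$ of arbitrarily small Haar measure; this requires that the Haar measure have no atom at $\dot e$. This is true: if $|\{\dot e\}|>0$, then $\delta_{\dot x}*\lambda=\lambda$ together with the axiom $\dot e\in\operatorname{supp}(\delta_{\dot x}*\delta_{\dot y})\Leftrightarrow\dot y=\check{\dot x}$ gives $|\{\dot e\}|=(\delta_{\dot x}*\delta_{\check{\dot x}})(\{\dot e\})\,|\{\check{\dot x}\}|$, so every singleton has mass at least $|\{\dot e\}|$, compact sets are finite, and $H$ is discrete; outer regularity then produces the small $U$. (c) The decay $\|\lambda(f)g_U\|_2\to 0$ is not a uniform-continuity statement but a measure-theoretic one. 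Reduce to $f_0\in C_c(H)$ with support $K$ via $\|\lambda(f-f_0)\|\le\|f-f_0\|_1$; take $U$ symmetric ($U=\check U$) inside a fixed compact neighborhood $U_0$ of $\dot e$; and use that $\dot z\mapsto\check{\dot z}$ carries the left Haar measure to a right-invariant measure, so that $\int_H\mathbf{1}_U(\check{\dot z}*\dot y)\,d\dot z=|\check U|=|U|$ for every $\dot y$. This yields $\|f_0*g_U\|_\infty\le\|f_0\|_\infty|U|^{1/2}$, while $\operatorname{supp}(f_0*g_U)$ stays inside the fixed compact set $K*U_0$; hence $\|f_0*g_U\|_2\le\|f_0\|_\infty|U|^{1/2}|K*U_0|^{1/2}\to 0$. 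With (a)--(c) in place, your inequality $1=\|\lambda(\dot x)g_U\|_2\le\|\lambda(\dot x)-\lambda(f)\|+\|\lambda(f)g_U\|_2<1/2+o(1)$ gives the contradiction, so the proposal is correct as a complete alternative proof.
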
 
\begin{proof}
	Suppose, in contrary, that  $ \lambda(\dot{x})\in C^{*}_{\lambda}(H) $ for some $ \dot{x} \in G(H)$. Then $ \lambda({\check{\dot{x}}})\notin C^{*}_{\lambda}(H)  $ by \cite[ Corollary 4.4]{esn1}. Therefore, by Hahn-Banach theorem there is $ m\in VN(H)^* $ such that $ \langle m, \Psi \rangle=0 $ for all $ \Psi \in C^{*}_{\lambda}(H)  $ and  $ \langle m,\lambda(\check{\dot{x}})\rangle\neq0. $ Let $ n\in VN(H)^*$ be defined by $\langle n, \Phi \rangle= \overline{\langle m, \Phi ^*\rangle}$ for all $\Phi \in VN(H)$. Then  $ \langle n,\lambda(\dot{x})\rangle =\overline{\langle m, \lambda (\check{\dot{x}})  \rangle}\neq0$ and $   \langle n,\Psi \rangle =0 $ for all $ \Psi \in C^{*}_{\lambda}(H)$, which is impossible.
\end{proof} 
\begin{proposition}\label{4.5}
	Let  $ u \in A(H). $ Then $ \rho_u $ is weakly compact if and only if $ \rho_u^{*}(VN(H))\subseteq C^{*}_{\lambda}(H). $
\end{proposition}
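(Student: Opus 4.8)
The plan is to pass to second duals and invoke Gantmacher's theorem, after identifying the adjoints of $\rho_u$. Since $A(H)$ is commutative, a direct computation gives $\rho_u^{*}\colon VN(H)\to VN(H)$, $\rho_u^{*}(\Phi)=u\cdot\Phi$, and then, using $u\cdot\Phi=\Phi\cdot u$, that the biadjoint is $\rho_u^{**}\colon A(H)^{**}\to A(H)^{**}$, $\rho_u^{**}(m)=m\,\square\,u$ (equivalently $m\,\diamondsuit\,u$). By Gantmacher's theorem $\rho_u$ is weakly compact if and only if $\rho_u^{**}(A(H)^{**})\subseteq A(H)$, so the proposition is equivalent to the assertion
\[
m\,\square\,u\in A(H)\ \ (\forall\, m\in A(H)^{**})\quad\Longleftrightarrow\quad u\cdot\Phi\in C^{*}_{\lambda}(H)\ \ (\forall\, \Phi\in VN(H)).
\]
I write $q\colon A(H)^{**}=VN(H)^{*}\twoheadrightarrow C^{*}_{\lambda}(H)^{*}=B_\lambda(H)$ for the restriction map dual to $C^{*}_{\lambda}(H)\hookrightarrow VN(H)$, and I use freely that $A(H)\hookrightarrow B_\lambda(H)$ is an injective ideal embedding whose pairing agrees with the $VN(H)$–$A(H)$ pairing on $C^{*}_{\lambda}(H)$.

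I would treat the (easier) forward direction first. Assume $\rho_u$ is weakly compact, i.e. $m\,\square\,u\in A(H)$ for every $m$, and suppose toward a contradiction that $u\cdot\Phi_{0}\notin C^{*}_{\lambda}(H)$ for some $\Phi_{0}\in VN(H)$. By Hahn--Banach choose $m_{0}\in VN(H)^{*}$ with $m_{0}|_{C^{*}_{\lambda}(H)}=0$ and $\langle m_{0},u\cdot\Phi_{0}\rangle\neq 0$, and put $w:=m_{0}\,\square\,u\in A(H)$. For every $\Psi\in C^{*}_{\lambda}(H)$ one has $u\cdot\Psi\in C^{*}_{\lambda}(H)$ --- indeed $C^{*}_{\lambda}(H)$ is an $A(H)$-submodule of $VN(H)$, since $u\cdot\lambda(f)=\lambda(uf)\in\lambda(L^{1}(H))$ for $f\in L^{1}(H)$ and $\lambda(L^{1}(H))$ is dense in $C^{*}_{\lambda}(H)$ --- so $\langle w,\Psi\rangle=\langle m_{0}\,\square\,u,\Psi\rangle=\langle m_{0},u\cdot\Psi\rangle=0$. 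Thus $w$ annihilates $C^{*}_{\lambda}(H)$; as $w\in A(H)\hookrightarrow B_\lambda(H)=C^{*}_{\lambda}(H)^{*}$ and this embedding is injective, $w=0$. But $\langle w,\Phi_{0}\rangle=\langle m_{0},u\cdot\Phi_{0}\rangle\neq 0$, a contradiction. Hence $u\cdot\Phi\in C^{*}_{\lambda}(H)$ for all $\Phi$, that is $\rho_u^{*}(VN(H))\subseteq C^{*}_{\lambda}(H)$.

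For the converse, assume $u\cdot\Phi\in C^{*}_{\lambda}(H)$ for all $\Phi$. Then $\langle m\,\square\,u,\Phi\rangle=\langle m,u\cdot\Phi\rangle$ depends on $m$ only through its restriction $q(m)\in B_\lambda(H)$, and the whole direction hinges on the module identity
\[
\langle b,u\cdot\Phi\rangle=\langle \Phi,bu\rangle\qquad(b\in B_\lambda(H),\ \Phi\in VN(H)),
\]
where $bu\in A(H)$ by the ideal property and the left-hand pairing is that of $B_\lambda(H)$ with $C^{*}_{\lambda}(H)$. Once this is known, $m\,\square\,u$ is represented by $q(m)\,u\in A(H)$, whence $\rho_u^{**}(A(H)^{**})\subseteq A(H)$ and $\rho_u$ is weakly compact. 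For $b\in A(H)$ the identity is immediate from $\langle u\cdot\Phi,v\rangle=\langle\Phi,uv\rangle$ together with the agreement of the two dualities on $C^{*}_{\lambda}(H)$.

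The step I expect to be the main obstacle is upgrading this identity from $b\in A(H)$ to an arbitrary $b\in B_\lambda(H)$: the naive weak$^{*}$-density argument fails, because $A(H)$ sits \emph{properly} inside $B_\lambda(H)$ and $C^{*}_{\lambda}(H)$ \emph{properly} inside $VN(H)$, so the relevant weak$^{*}$ limits are not controlled by the pairings one needs. The hypothesis $u\cdot\Phi\in C^{*}_{\lambda}(H)$ is exactly what makes the left-hand pairing meaningful, and I would push the identity through on the dense subalgebra $\lambda(L^{1}(H))\subseteq C^{*}_{\lambda}(H)$, where the $B_\lambda(H)$-duality is concrete integration, by combining the companion identity $\langle u\cdot\Psi,b\rangle=\langle\Psi,bu\rangle$ for $\Psi\in C^{*}_{\lambda}(H)$ with the associativity $b\cdot(u\cdot\Phi)=(bu)\cdot\Phi$ of the module actions. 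This Fubini-type step is where the real work lies; everything else --- the Gantmacher reduction, the identification of the adjoints, and the Hahn--Banach argument for the forward implication --- is routine.
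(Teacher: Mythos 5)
Your Gantmacher framing, the identification $\rho_u^{**}(m)=m\,\square\,u$, and the entire forward direction are correct. The Hahn--Banach separation argument, together with the submodule property $u\cdot C^{*}_{\lambda}(H)\subseteq C^{*}_{\lambda}(H)$ (which you actually prove), is a legitimate substitute for the paper's route, which instead observes that weak compactness of $\rho_u$ makes $\rho_u^{*}$ weak$^{*}$-weak continuous and then uses the weak$^{*}$ density of $C^{*}_{\lambda}(H)$ in $VN(H)$ together with the weak closedness of $C^{*}_{\lambda}(H)$. These are equivalent mechanisms: $\rho_u$ is weakly compact iff $\rho_u^{**}(A(H)^{**})\subseteq A(H)$ iff $\rho_u^{*}$ is weak$^{*}$-weak continuous.

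The converse direction, however, contains a genuine gap, and your own diagnosis of its location is exact. Everything reduces to the identity $\langle b,u\cdot\Phi\rangle=\langle\Phi,bu\rangle$ for all $b\in B_\lambda(H)$ and $\Phi\in VN(H)$, which you establish only for $b\in A(H)$. The completion you sketch --- density of $\lambda(L^{1}(H))$ in $C^{*}_{\lambda}(H)$, the companion identity $\langle b,u\cdot\Psi\rangle=\langle\Psi,bu\rangle$ for $\Psi\in C^{*}_{\lambda}(H)$, and the associativity $b\cdot(u\cdot\Phi)=(bu)\cdot\Phi$ --- does not reach arbitrary $\Phi$: chasing those ingredients through yields the identity exactly when $\Phi$ lies in $C^{*}_{\lambda}(H)$ or in the closed linear span of $A(H)\cdot VN(H)$ (the analogue of the uniformly continuous functionals), and that span is a \emph{proper} subspace of $VN(H)$ precisely in the non-discrete case, which is the only case where the converse has content. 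The obstruction is structural: the functional $\Phi\mapsto\langle b,u\cdot\Phi\rangle$ is bounded but not known to be weak$^{*}$ continuous, so its agreement with the weak$^{*}$ continuous functional $\Phi\mapsto\langle\Phi,bu\rangle$ on a weak$^{*}$ dense (or norm-closed but proper) subspace cannot be propagated to all of $VN(H)$; every natural attempt to do so quietly assumes the weak compactness one is trying to prove. In fairness, the paper's own proof dispatches this very step in one line, asserting $\langle u\cdot\Phi_{\alpha},\varphi\rangle=\langle\Phi_{\alpha},u\varphi\rangle$ ``using the fact that $A(H)$ is an ideal in $B_\lambda(H)$'' with no further justification, so you have put your finger on the thinnest point of the published argument --- but a blind proof that explicitly leaves this step open has not proved the statement.
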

\begin{proof}
	If $\rho_u  $ is weakly compact, then $\rho_u^{*} $ is weak$ ^* $-weak continuous. Suppose that $ \Phi \in \rho_u^{*}(VN(H))$. Since $ C^{*}_{\lambda}(H) $ is weak$ ^*$ dense in $ VN(H) ,$ there is a net $ (\Psi_\alpha) $ in $ C^{*}_{\lambda}(H) $ such that  weak-$\lim\limits_{\alpha} \rho_u^{*}(\Psi_\alpha) =\rho_u^{*}(\Phi).$ Hence, $ \rho_u^{*}(VN(H))\subseteq \overline {\rho_u^{*}(C^{*}_{\lambda}(H))}^w\subseteq C^{*}_{\lambda}(H)$.
	
	Conversely, let $ \rho_u^{*}(VN(H))\subseteq C^{*}_{\lambda}(H) $. Suppose that $ (\Phi_\alpha) $ is a net in $ VN(H) $ such that weak$ ^* $-$\lim\limits_{\alpha} \Phi_\alpha =\Phi$ for some $ \Phi \in VN(H). $ Now, let $ m\in VN(H)^* $ and $ \varphi \in B_\lambda(H) $ be the restriction of $ m$ on  $ C^{*}_{\lambda}(H)$. Using the fact that  $ A(H) $  is an ideal in $ B_\lambda(H) $, we have
	\begin{align*}
	\langle u\cdot \Phi_{\alpha}, m\rangle
	=\langle u\cdot \Phi_{\alpha}, \varphi\rangle
	&=\langle \Phi _{\alpha}, u \varphi \rangle\\
	\longrightarrow
	\langle \Phi , u \varphi\rangle 
	& =\langle u\cdot \Phi , \varphi \rangle
	=\langle u\cdot \Phi , m\rangle.
	\end{align*}
	Hence,  $ \rho_u^{*}$ is  weak$ ^* $-weak continuous. It follows from \cite[ Theorem 3.5.14]{meg} that $ \rho_u $  is weakly compact.
\end{proof}

\begin{theorem}\label{4.1}
	The following conditions are equivalent.
	
	$  (\rm{i}) $  $ H $ is discrete.
	
	$ (\rm{ii}) $	$ \rho_u $ is compact for every $ u \in A(H). $
	
	$ (\rm{iii}) $ There exists  $ u \in A(H) $ such that $ \rho_u $ is weakly compact
	and  $  u(\dot{a})\neq0 $ for
	
	\ \ \ \ \ some  $ \dot{a} \in G(H)$.
\end{theorem}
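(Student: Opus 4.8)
The plan is to establish the cycle of implications $(\mathrm{i})\Rightarrow(\mathrm{ii})\Rightarrow(\mathrm{iii})\Rightarrow(\mathrm{i})$, drawing the analytic substance from Proposition~\ref{4.5} and Lemma~\ref{4.7}.

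For $(\mathrm{i})\Rightarrow(\mathrm{ii})$ I would exploit that $A(H)$ is Tauberian, so the functions in $A(H)$ with compact support are norm dense. When $H$ is discrete, compact support means finite support, and for such a finitely supported $u$ the range of $\rho_u$ lies inside the finite-dimensional subspace of functions supported on $\mathrm{supp}(u)$; hence $\rho_u$ has finite rank and is compact. Writing an arbitrary $u\in A(H)$ as a norm limit of finitely supported elements $u_n$ and using $\|\rho_u-\rho_{u_n}\|=\|\rho_{u-u_n}\|\le\|u-u_n\|_{A(H)}$ (submultiplicativity of the $A(H)$-norm), the operator $\rho_u$ is a norm limit of compact operators and therefore compact. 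The implication $(\mathrm{ii})\Rightarrow(\mathrm{iii})$ is then immediate: every compact operator is weakly compact, and since $A(H)$ is regular one may pick some $u\in A(H)$ with $u(\dot e)\neq0$, noting that $\dot e\in G(H)$.

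The heart of the argument is $(\mathrm{iii})\Rightarrow(\mathrm{i})$, which I would prove by contradiction. Suppose $H$ is not discrete and let $u\in A(H)$ be as in $(\mathrm{iii})$. The key computation is that, under the duality $\langle\lambda(\dot x),v\rangle=v(\dot x)$, one has
$$
\langle\rho_u^{*}(\lambda(\dot x)),v\rangle=\langle\lambda(\dot x),vu\rangle=u(\dot x)\,v(\dot x)=u(\dot x)\,\langle\lambda(\dot x),v\rangle,
$$
so that $\rho_u^{*}(\lambda(\dot x))=u(\dot x)\lambda(\dot x)$ for every $\dot x\in H$. Taking $\dot x=\dot a$ and invoking the weak compactness of $\rho_u$ through Proposition~\ref{4.5}, we obtain $u(\dot a)\lambda(\dot a)=\rho_u^{*}(\lambda(\dot a))\in C^{*}_{\lambda}(H)$. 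Since $u(\dot a)\neq0$, this yields $\lambda(\dot a)\in C^{*}_{\lambda}(H)$ with $\dot a\in G(H)$, contradicting Lemma~\ref{4.7}. Hence $H$ must be discrete.

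I expect the only real subtlety to lie in $(\mathrm{i})\Rightarrow(\mathrm{ii})$, namely confirming that the Tauberian property genuinely supplies a norm-dense set of finitely supported functions in the discrete case and that the finite-rank reduction is legitimate; the remaining two implications reduce to bookkeeping once the characterization of weak compactness in Proposition~\ref{4.5} and the non-membership statement of Lemma~\ref{4.7} are in hand.
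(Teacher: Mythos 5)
Your proposal is correct and follows essentially the same route as the paper's own proof: density of finitely supported elements (from the Tauberian property) plus a norm-approximation argument for $(\mathrm{i})\Rightarrow(\mathrm{ii})$, and for $(\mathrm{iii})\Rightarrow(\mathrm{i})$ the identity $\rho_u^{*}(\lambda(\dot a))=u(\dot a)\lambda(\dot a)$ combined with Proposition~\ref{4.5} and Lemma~\ref{4.7}. Your write-up in fact makes explicit two details the paper leaves implicit (the finite-rank reduction and the use of regularity to produce $u$ with $u(\dot e)\neq 0$ in $(\mathrm{ii})\Rightarrow(\mathrm{iii})$), but the mathematical substance is identical.
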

\begin{proof}
	$ (i)\Rightarrow (ii)$. If $ H $ is discrete and $ u= 
	{\bf{1}}_{\dot{a}} $ for some $ {\dot{a}}\in H $, then $ \rho_{u}(A(H))=  \{\lambda {\bf{1}}_{\dot{a}} : \lambda \in \mathbb{C} \}.$ This implies that $ \rho_{u} $ is compact.
	Therefore, $ \rho_{u} $ is compact  for every $ u\in A(H)  $ with finite support. Now, since the set  of  all $ u\in A(H) $ such that $ u $ has finite support, is dense in $ A(H)$, a simple approximation argument
	gives that $ \rho_u $ is compact for all $ u\in A(H). $
	
	$ (ii)\Rightarrow (iii)$. This is obvious.

	$ (iii)\Rightarrow (i)$. Let $ u \in A(H)$ be such that  $  u(\dot{a})\neq0 $ for some  $ \dot{a} \in G(H)$ and  $ \rho_u $ is weakly compact. Then for each $ v\in A(H), $ we have
	$$\langle \rho_u^{*}(\lambda(\dot{a})), v\rangle= \langle \lambda(\dot{a}), \rho_u(v)\rangle= u(\dot{a}) \langle  \lambda(\dot{a}),v\rangle. $$
	Since $  u(\dot{a})\neq0 $, it follows that  $\lambda(\dot{a})\in \rho_u^{*}(VN(H))\subseteq C^{*}_{\lambda}(H). $ Hence, $ H $ is discrete by Lemma \ref{4.7}.
\end{proof}

It is known that a Banach algebra $ \mathcal{A} $ is an ideal in $ \mathcal{A}^{**} $ if and only if multiplication
operators in ${\mathcal A}$ are weakly compact; see \cite[page 248]{dal}. The next result generalizes   \cite [Theorem 3.7]{lau1981}.

\begin{corollary}\label{3.6}
	$ H $ is discrete if and only if $ A(H) $ is an ideal in $ VN(H)^* $.
\end{corollary}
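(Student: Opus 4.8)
The plan is to derive this entirely from Theorem \ref{4.1} together with the structural fact recalled just above the statement, namely that a Banach algebra $\mathcal{A}$ is an ideal in $\mathcal{A}^{**}$ precisely when every multiplication operator on $\mathcal{A}$ is weakly compact. The first observation I would make is that since $VN(H)$ is isometrically isomorphic to the dual $A(H)^*$, its dual $VN(H)^*$ is just $A(H)^{**}$, and the canonical embedding of $A(H)$ into $VN(H)^*$ coincides with the usual embedding of $A(H)$ into its second dual. Hence the assertion that $A(H)$ is an ideal in $VN(H)^*$ is identical to the assertion that $A(H)$ is an ideal in $A(H)^{**}$, and by the cited fact this is equivalent to the weak compactness of $\rho_u$ for every $u\in A(H)$ (recall that $A(H)$ is commutative, so all multiplication operators are of the form $\rho_u$).

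For the forward implication I would argue as follows. If $H$ is discrete, then Theorem \ref{4.1}, the implication $(\mathrm{i})\Rightarrow(\mathrm{ii})$, gives that $\rho_u$ is compact, and in particular weakly compact, for every $u\in A(H)$. By the structural fact this means exactly that $A(H)$ is an ideal in $A(H)^{**}=VN(H)^*$.

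For the converse I would invoke the third condition of Theorem \ref{4.1}. Assume $A(H)$ is an ideal in $VN(H)^*$; then every $\rho_u$ is weakly compact. Since $\dot{e}\in G(H)$ and $A(H)$ is semisimple, regular and Tauberian, there exists $u\in A(H)$ with $u(\dot{e})\neq 0$. For this particular $u$ the operator $\rho_u$ is weakly compact and $u$ does not vanish at the point $\dot{e}\in G(H)$, so condition $(\mathrm{iii})$ of Theorem \ref{4.1} is satisfied and therefore $H$ is discrete.

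I do not anticipate a genuine obstacle here; the only point requiring a small justification is the existence of a $u\in A(H)$ not vanishing at some point of $G(H)$, which follows because the characters of $A(H)$ are exactly the point evaluations $\Phi_{\dot{x}}$ and $A(H)$ is nonzero, so no point---in particular $\dot{e}$---can be a common zero of the entire algebra. Everything else is a direct concatenation of Theorem \ref{4.1} and the recalled dictionary between the ideal property and weak compactness of multiplication operators.
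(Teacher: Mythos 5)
Your proposal is correct and is exactly the argument the paper intends: the corollary is stated without proof immediately after recalling the fact from \cite{dal} that a Banach algebra is an ideal in its bidual precisely when all multiplication operators are weakly compact, and the equivalence then follows from Theorem \ref{4.1} just as you describe. Your additional remark on the existence of $u\in A(H)$ with $u(\dot{e})\neq 0$ (via the identification of characters with point evaluations from \cite[Theorem 3.13]{mur2}) correctly fills in the one small detail the paper leaves implicit.
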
  

Let $H$ be  a locally compact group. Since $ G(H)=H $, the following result is an immediate consequence of Theorem \ref{4.1}.

\begin{corollary}
	A locally compact group $ G $ is discrete if and only if there is a non-zero $ u \in A(G) $ such that $ \rho_u $ is weakly compact.
\end{corollary}

Since $ G(H) $ always possesses $ \dot{e} $, we have the following corollary.
\begin{corollary}\label{3.7}
	$ H $ is discrete if and only if  there is   $ u \in A(H) $ with $ u(\dot{e})\neq0 $ such that $ \rho_u $ is weakly compact.
\end{corollary}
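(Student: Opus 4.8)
The plan is to derive this directly as a special case of Theorem~\ref{4.1}, which is the main characterization this corollary is packaging. The statement to prove is that $H$ is discrete if and only if there exists $u \in A(H)$ with $u(\dot{e}) \neq 0$ such that $\rho_u$ is weakly compact. The key observation, already flagged in the sentence preceding the statement, is that the identity element $\dot{e}$ always belongs to the maximum subgroup $G(H)$: indeed $\delta_{\dot{e}} \ast \delta_{\check{\dot{e}}} = \delta_{\dot{e}} = \delta_{\check{\dot{e}}} \ast \delta_{\dot{e}}$ holds trivially since $\check{\dot{e}} = \dot{e}$. So $\dot{e}$ is a legitimate choice for the point $\dot{a} \in G(H)$ appearing in condition (iii) of Theorem~\ref{4.1}.

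For the forward direction, I would argue that if $H$ is discrete then by the implication $(i) \Rightarrow (ii)$ of Theorem~\ref{4.1}, $\rho_u$ is compact, hence weakly compact, for \emph{every} $u \in A(H)$. In particular I may take $u = {\bf 1}_{\dot{e}}$, which satisfies $u(\dot{e}) = 1 \neq 0$; this exhibits the required element. For the converse, suppose there exists $u \in A(H)$ with $u(\dot{e}) \neq 0$ and $\rho_u$ weakly compact. Since $\dot{e} \in G(H)$, this is precisely condition (iii) of Theorem~\ref{4.1} with $\dot{a} = \dot{e}$, and the implication $(iii) \Rightarrow (i)$ of that theorem yields that $H$ is discrete.

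The entire content is thus bookkeeping: recognizing that $\dot{e} \in G(H)$ specializes the general point $\dot{a}$ in Theorem~\ref{4.1} to the identity, so that both directions of the corollary fall out of the corresponding directions of the theorem. There is no genuine obstacle here, since the hard analytic work—the weak-compactness characterization via Proposition~\ref{4.5} and the non-membership $\lambda(\dot{x}) \notin C^*_\lambda(H)$ from Lemma~\ref{4.7}—is already absorbed into Theorem~\ref{4.1}. The only point demanding a moment's care is confirming that $\dot{e}$ indeed lies in $G(H)$, which is immediate from the defining relation of the maximum subgroup together with $\check{\dot{e}} = \dot{e}$; once this is noted, the corollary is a one-line specialization in each direction.
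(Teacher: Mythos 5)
Your proposal is correct and matches the paper's own reasoning exactly: the paper presents this corollary as an immediate consequence of Theorem~\ref{4.1}, justified by the single remark that $\dot{e}$ always lies in $G(H)$, which is precisely your specialization $\dot{a} = \dot{e}$ together with taking $u = {\bf 1}_{\dot{e}}$ in the forward direction. Nothing further is needed.
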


\begin{corollary}
	The following conditions are equivalent.
	
	{\rm(i)} $ H $ is discrete.
	
	{\rm(ii)} $ A(H) $ has a 1-dimensional ideal $ I $ such that $ G(H)\nsubseteq Z(I) $.
	
	{\rm(iii)} $ A(H) $ has a  finite dimensional ideal $ I $ such that $ G(H)\nsubseteq Z(I) $.

\end{corollary}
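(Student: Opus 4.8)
The plan is to establish the cycle $(\mathrm{i})\Rightarrow(\mathrm{ii})\Rightarrow(\mathrm{iii})\Rightarrow(\mathrm{i})$, drawing almost entirely on Corollary \ref{5.6a} and Theorem \ref{4.1}. For $(\mathrm{i})\Rightarrow(\mathrm{ii})$, I would assume $H$ discrete and take $I=\mathbb{C}{\bf 1}_{\dot{e}}$. By Proposition \ref{5.5} (or directly Corollary \ref{5.6a}), ${\bf 1}_{\dot{e}}$ is a minimal idempotent, so $v\,{\bf 1}_{\dot{e}}=v(\dot{e}){\bf 1}_{\dot{e}}$ for every $v\in A(H)$, which shows $I$ is a one-dimensional ideal. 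Since $\dot{e}\in G(H)$ and ${\bf 1}_{\dot{e}}(\dot{e})=1\neq0$, we have $\dot{e}\notin Z(I)$ and hence $G(H)\nsubseteq Z(I)$, giving $(\mathrm{ii})$.

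The implication $(\mathrm{ii})\Rightarrow(\mathrm{iii})$ is immediate, as a one-dimensional ideal is in particular finite dimensional.

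The substantive step is $(\mathrm{iii})\Rightarrow(\mathrm{i})$. Suppose $A(H)$ has a finite dimensional ideal $I$ with $G(H)\nsubseteq Z(I)$. Then there exist $\dot{a}\in G(H)$ and $u\in I$ with $u(\dot{a})\neq0$. The key observation is that, because $I$ is an ideal and $u\in I$, we have $\rho_u(A(H))=A(H)u\subseteq I$; since $I$ is finite dimensional, $\rho_u$ has finite rank and is therefore compact, in particular weakly compact. Thus $u$ satisfies the hypotheses of condition $(\mathrm{iii})$ of Theorem \ref{4.1}, namely $\rho_u$ is weakly compact and $u(\dot{a})\neq0$ for some $\dot{a}\in G(H)$, and the implication $(\mathrm{iii})\Rightarrow(\mathrm{i})$ of that theorem yields that $H$ is discrete.

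I do not expect any genuine obstacle here, since the corollary is an assembly of the two cited results; the only point requiring care is the finite-rank remark $\rho_u(A(H))\subseteq I$, which is exactly where the hypothesis that $I$ is a \emph{finite dimensional ideal} (rather than merely a finite dimensional subspace) is used to manufacture a weakly compact multiplier vanishing nowhere on $G(H)$. Once that is in place, Theorem \ref{4.1} does all the work.
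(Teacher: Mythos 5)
Your proposal is correct and follows essentially the same route as the paper: the paper also takes $I=\mathbb{C}{\bf 1}_{\dot a}$ for (i)$\Rightarrow$(ii), notes (ii)$\Rightarrow$(iii) is trivial, and for (iii)$\Rightarrow$(i) picks $u\in I$ with $u(G(H))\neq\{0\}$, observes $\rho_u$ has finite rank (hence is weakly compact), and invokes Theorem \ref{4.1}. Your write-up merely spells out the details the paper leaves implicit, such as $\rho_u(A(H))\subseteq I$.
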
  
\begin{proof}
	(i)$ \Rightarrow $(ii). Suppose that $ H $ is discrete. Then  for each $ \dot{a} \in G(H) $ the set $\mathbb{C}{\bf{1}}_{\dot{a}} $ is a non-zero  1-dimensional ideal in $ A(H). $ 
	
	(ii)$ \Rightarrow $(iii). This is clear.
	
	(iii)$ \Rightarrow $(i). Let $ I $ be a  finite dimensional ideal in $ A(H) $ with  $ G(H)\nsubseteq Z(I) $. Then there is   $ u \in I  $ such that $ u(G(H))\neq\{0\} $ and $\rho_u $ has finite rank. Therefore, Theorem \ref{4.1} implies that $ H$ is discrete.  
\end{proof}

\begin{corollary}\label{4.8}
	$ H $ is discrete if and only if there exists  $ u \in B_\lambda(H) $ such that $ \rho_u $ is weakly compact  on $B_\lambda(H) $ and  $  u(\dot{a})\neq0 $ for some  $ \dot{a} \in G(H)$.
\end{corollary}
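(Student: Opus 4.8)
The plan is to treat this as a corollary of Theorem~\ref{4.1} rather than redoing the machinery of Proposition~\ref{4.5} on $B_\lambda(H)$. The forward implication I would dispose of by hand: if $H$ is discrete, put $u={\bf1}_{\dot{e}}\in A(H)\subseteq B_\lambda(H)$. Since multiplication in $B_\lambda(H)$ is pointwise and $H$ is discrete, $\rho_u(v)=v\cdot{\bf1}_{\dot{e}}=v(\dot{e}){\bf1}_{\dot{e}}$ for every $v\in B_\lambda(H)$, so $\rho_u$ has one-dimensional range and is in particular weakly compact; moreover $u(\dot{e})=1\neq0$ with $\dot{e}\in G(H)$, so $u$ is of the required form.

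For the converse, suppose $u\in B_\lambda(H)$ is such that $\rho_u$ is weakly compact on $B_\lambda(H)$ and $u(\dot{a})\neq0$ for some $\dot{a}\in G(H)$. The lever is that $A(H)$ is an ideal of $B_\lambda(H)$. Consequently $\rho_u(A(H))\subseteq A(H)$, and I would first check that the restriction $\rho_u|_{A(H)}\colon A(H)\to A(H)$ is again weakly compact: composing the weakly compact $\rho_u$ with the bounded inclusion $A(H)\hookrightarrow B_\lambda(H)$ yields a weakly compact map into $B_\lambda(H)$ whose range lies in the closed (hence weakly closed) subspace $A(H)$, so its image of the unit ball is already relatively weakly compact in $A(H)$. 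Next, using regularity of $A(H)$ I would choose $v\in A(H)$ with $v(\dot{a})\neq0$ and set $w:=uv$; because $A(H)$ is an ideal, $w\in A(H)$, and $w(\dot{a})=u(\dot{a})v(\dot{a})\neq0$.

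Finally I would factor the multiplication operator on $A(H)$ as $\rho_w=\rho_v\circ(\rho_u|_{A(H)})$, which holds by associativity and commutativity of the pointwise product: for $f\in A(H)$ one has $\rho_v(\rho_u(f))=(fu)v=f(uv)=\rho_w(f)$, with $fu\in A(H)$ by the ideal property. Since $\rho_u|_{A(H)}$ is weakly compact and $\rho_v$ is bounded, $\rho_w$ is weakly compact on $A(H)$; as $w\in A(H)$, $w(\dot{a})\neq0$ and $\dot{a}\in G(H)$, Theorem~\ref{4.1}, $(\mathrm{iii})\Rightarrow(\mathrm{i})$, forces $H$ to be discrete. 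The only delicate point here is the bookkeeping with weak compactness -- confirming that both the restriction to the invariant ideal $A(H)$ and the post-composition with $\rho_v$ preserve weak compactness, and that $uv$ really lands in $A(H)$ -- after which the claim collapses onto Theorem~\ref{4.1}, so no $B_\lambda(H)$-analogue of Proposition~\ref{4.5} needs to be established.
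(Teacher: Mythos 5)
Your proposal is correct and is essentially the paper's own argument: the paper also reduces to Theorem~\ref{4.1} by multiplying $u$ with some $v\in A(H)$ satisfying $v(\dot{a})\neq 0$ (so that $vu\in A(H)$, $(vu)(\dot a)\neq 0$, and $\rho_{vu}$ is weakly compact on $A(H)$), and handles the discrete direction by taking $u={\bf 1}_{\dot{e}}$. You merely spell out the details the paper leaves implicit, namely the factorization $\rho_{vu}=\rho_v\circ\rho_u|_{A(H)}$ and the check that restriction to the weakly closed ideal $A(H)$ preserves weak compactness.
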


\begin{proof}
	Let $ v \in A(H) $ be such that $v(\dot{a})\neq0 $. Then $ \rho_{vu}$ is weakly compact on $ A(H) $. Hence, $ H $ is discrete by Theorem \ref{4.1}. For the converse, choose $ u = {\bf 1}_{\dot{e}} $.
\end{proof}

\begin{proposition}\label{5.3}
	Let $ H $ be an ultraspherical hypergroup on amenable locally compact
	group $ G $ and let $ u \in B_\lambda(H) $. If $ \rho_u $ is weakly compact, then $ u \in A(H) $. 
\end{proposition}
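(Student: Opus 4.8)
The plan is to exploit the amenability of $G$ through a bounded approximate identity for $A(H)$, combined with the fact (recalled in the preliminaries) that $A(H)$ is an ideal in $B_\lambda(H)$. First I would invoke that, since $G$ is amenable, the Fourier algebra $A(H)$ admits a bounded approximate identity $(e_\alpha)$, say with $\|e_\alpha\|\le C$ for all $\alpha$; this is where amenability enters in an essential way. Because $A(H)$ is an ideal in $B_\lambda(H)$, for the given $u \in B_\lambda(H)$ each product $u e_\alpha = \rho_u(e_\alpha)$ lies in $A(H)$. The strategy is then to approximate $u$ by the net $(u e_\alpha)$ well enough that its limit is forced into $A(H)$ by the weak compactness of $\rho_u$.

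The weak compactness of $\rho_u$ enters as follows. Since $(e_\alpha)$ is norm bounded and $\rho_u$ is weakly compact, the set $\{\rho_u(e_\alpha)\}=\{u e_\alpha\}$ is relatively weakly compact in $B_\lambda(H)$. Hence, passing to a subnet, I may assume $u e_\alpha \to w$ weakly in $B_\lambda(H)$ for some $w \in B_\lambda(H)$. As $A(H)$ is a norm-closed subspace of $B_\lambda(H)$, it is convex and norm-closed, hence weakly closed; since each $u e_\alpha \in A(H)$, the weak limit satisfies $w \in A(H)$. It therefore remains only to identify $w$ with $u$.

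For this identification I would show that $u e_\alpha \to u$ in the weak-$*$ topology $\sigma(B_\lambda(H), C^*_\lambda(H))$; as the weak topology on $B_\lambda(H) = C^*_\lambda(H)^*$ refines the weak-$*$ topology and weak-$*$ limits are unique, this yields $w = u$ and hence $u \in A(H)$. The weak-$*$ convergence I would obtain from the pointwise action of $B_\lambda(H)$ on $C^*_\lambda(H)$. Using $v \cdot \lambda(\dot x) = v(\dot x)\lambda(\dot x)$ one gets $v \cdot \lambda(f) = \lambda(vf)$ for $v \in B_\lambda(H)$ and $f \in L^1(H)$, so $B_\lambda(H) \cdot C^*_\lambda(H) \subseteq C^*_\lambda(H)$ and $\langle u e_\alpha, \Psi \rangle = \langle u, e_\alpha \cdot \Psi \rangle$ for every $\Psi \in C^*_\lambda(H)$. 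It then suffices to prove $e_\alpha \cdot \Psi \to \Psi$ in norm. Regularity and the Tauberian property of $A(H)$ give $C_c(H) \subseteq A(H) \cdot L^1(H)$ (choosing $v \in A(H)$ equal to $1$ on the support of a given $g \in C_c(H)$), so $C^*_\lambda(H)$ is an essential $A(H)$-module; on elements of the form $v \cdot \Psi'$ with $v \in A(H)$ one has $e_\alpha \cdot (v \cdot \Psi') = (e_\alpha v) \cdot \Psi' \to v \cdot \Psi'$ because $e_\alpha v \to v$, and the uniform bound $\|e_\alpha\| \le C$ extends this by density to all $\Psi \in C^*_\lambda(H)$.

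The main obstacle, and the step I would treat most carefully, is securing the bounded approximate identity for $A(H)$ from amenability of $G$ and verifying cleanly that the pointwise action of $B_\lambda(H)$ on $C^*_\lambda(H)$ indeed maps into $C^*_\lambda(H)$, with $C^*_\lambda(H)$ essential as an $A(H)$-module. Once these structural facts are in hand, the remaining ingredients (relative weak compactness of $\{u e_\alpha\}$, weak closedness of $A(H)$, and uniqueness of weak-$*$ limits) are soft. I would not expect difficulty from the weak-compactness hypothesis itself, which is used only to extract the weak cluster point $w$.
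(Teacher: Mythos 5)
Your proof is correct, but it takes a genuinely different route from the paper's. The paper's argument notes that $\rho_u$ on $B_\lambda(H)$ is the adjoint of the map $\widetilde{\rho}_u\colon \Psi \mapsto u\cdot\Psi$ on $C^*_\lambda(H)$, so weak compactness (via Gantmacher's theorem) makes $\rho_u$ weak$^*$-weak continuous; combining this with the weak$^*$ density of $A(H)$ in $B_\lambda(H)$, the ideal property, and the weak closedness of $A(H)$ shows that the \emph{entire range} of $\rho_u$ lies in $A(H)$, and amenability then enters only through the fact that $1\in B_\lambda(H)$ (\cite[Theorem 3.4]{esn1}), giving $u=\rho_u(1)\in A(H)$. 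You instead feed amenability in through a bounded approximate identity $(e_\alpha)$ for $A(H)$, use weak compactness to extract a weak cluster point $w\in A(H)$ of $(ue_\alpha)$, and identify $w=u$ by proving $ue_\alpha\to u$ in the $\sigma(B_\lambda(H),C^*_\lambda(H))$ topology, using essentiality of $C^*_\lambda(H)$ as an $A(H)$-module. Both arguments share the same skeleton (ideal property, weak closedness of $A(H)$, exhibiting $u$ as a weak limit of elements of $A(H)$ produced by $\rho_u$), but the paper's version yields the stronger intermediate statement $\rho_u(B_\lambda(H))\subseteq A(H)$ (reused in Proposition~\ref{6.11}) with less machinery, while yours replaces the pre-adjoint/weak$^*$-continuity device by a more elementary cluster-point argument at the cost of the extra limit-identification step. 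One caveat: the Leptin-type theorem you invoke --- $G$ amenable implies $A(H)$ has a bounded approximate identity --- is nowhere stated in this paper and needs a citation; it is true (it follows from Leptin's theorem for $A(G)$ together with the contractive projection of $A(G)$ onto $A(H)$ induced by the spherical projector, using the module property $\pi(uv)=u\,\pi(v)$ for $\pi$-radial $u$), and it is an input of exactly the same strength as the paper's $1\in B_\lambda(H)$, both being equivalent to amenability of $G$, so neither route is cheaper in its hypotheses.
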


\begin{proof}
	Define the map $ \widetilde{\rho}_u: C^*_{\lambda}(H)\longrightarrow C^*_{\lambda}(H)$  by $ \widetilde{\rho}_u(\Psi)= u\cdot \Psi$ for all $\Psi\in C^*_\lambda(H)$. It is now easily verified that $ \widetilde{\rho}_u^{*}=\rho_u $. Hence, $ \rho_u $ is weak$ ^* $-weak continuous. Using arguments similar to those in the proof of Proposition \ref{4.5}, replacing $C^*_{\lambda}(H)  $ by $ A(H) $ and $ VN(H) $ by $ B_\lambda (H) $, one can show that  the range of $ \rho_u $ is contained in $ A(H) $.
	Now, since $ G $ is amenable, it follows from \cite[ Theorem 3.4]{esn1} that $ 1 \in B_\lambda (H) $. Therefore, $ u = \rho_u (1) \in A(H) $. 
\end{proof}

\begin{proposition}\label{4.10}
	$ H $ is discrete if and only if  there is  a weakly compact right multiplier $ T :VN(H)^*\longrightarrow VN(H)^*$ and $ m\in VN(H)^*$ such that $ T(m)\in A(H) $ and
	$ \langle T(m),\lambda(\dot{e})\rangle\neq0 $. 
\end{proposition}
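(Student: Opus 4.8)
The plan is to move back and forth between right multipliers on the Banach algebra $(VN(H)^*,\square)=(A(H)^{**},\square)$ and the multiplication operators $\rho_u$ on $A(H)$, and then to invoke Corollary \ref{3.7}. Throughout, let $\iota\colon A(H)\to A(H)^{**}=VN(H)^*$ denote the canonical embedding, which is an isometric algebra homomorphism for $\square$.

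For the forward implication, suppose $H$ is discrete and set $u=\mathbf 1_{\dot e}\in A(H)$. By Theorem \ref{4.1} the operator $\rho_u$ is compact, hence its second adjoint $T:=\rho_u^{**}\colon VN(H)^*\to VN(H)^*$ is compact and, in particular, weakly compact. A short computation with the Arens products identifies $T$ with right $\square$-multiplication by $\iota(u)$, that is $T(z)=z\,\square\,\iota(u)$ for all $z\in VN(H)^*$; associativity of $\square$ then gives $T(z\,\square\,w)=z\,\square\,T(w)$, so $T$ is a right multiplier. Choosing $m=\iota(u)$, one gets $T(m)=\iota(u^2)=\iota(\mathbf 1_{\dot e})\in A(H)$ and $\langle T(m),\lambda(\dot e)\rangle=\mathbf 1_{\dot e}(\dot e)=1\neq0$, which produces the required pair.

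For the converse, assume such $T$ and $m$ are given and put $u_0:=T(m)\in A(H)$; then $u_0(\dot e)=\langle T(m),\lambda(\dot e)\rangle\neq0$ and $\dot e\in G(H)$. The goal is to prove that $\rho_{u_0}$ is weakly compact on $A(H)$ and then to apply Corollary \ref{3.7}. The link is the identity
$$ T(\iota(v)\,\square\, m)=\iota(v)\,\square\, T(m)=\iota(v)\,\square\,\iota(u_0)=\iota(vu_0)=\iota\big(\rho_{u_0}(v)\big)\qquad(v\in A(H)), $$
where the first equality is the right multiplier property and the remaining ones use that $\iota$ is multiplicative. Hence, writing $\Psi\colon A(H)\to VN(H)^*$ for the bounded operator $\Psi(v)=\iota(v)\,\square\,m$, we obtain $T\circ\Psi=\iota\circ\rho_{u_0}$.

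Since $T$ is weakly compact and $\Psi$ is bounded, $\iota\circ\rho_{u_0}=T\circ\Psi$ is weakly compact. The remaining and, I expect, only delicate point is to pass from weak compactness of the bidual-valued map $\iota\circ\rho_{u_0}$ to weak compactness of $\rho_{u_0}$ on $A(H)$ itself. I would settle this by duality: Gantmacher's theorem makes $(\iota\circ\rho_{u_0})^*$ weakly compact, and the elementary identity $(\iota\circ\rho_{u_0})^*\circ\iota_{VN(H)}=\rho_{u_0}^*$, which comes from $\iota^*\circ\iota_{VN(H)}=\mathrm{id}_{VN(H)}$, then shows $\rho_{u_0}^*$ is weakly compact, so $\rho_{u_0}$ is weakly compact by Gantmacher once more. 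With $\rho_{u_0}$ weakly compact and $u_0(\dot e)\neq0$, Corollary \ref{3.7} gives that $H$ is discrete. Should the bidual descent prove awkward, an alternative is to verify the criterion of Proposition \ref{4.5} directly, namely that $\rho_{u_0}^*(VN(H))\subseteq C^*_\lambda(H)$.
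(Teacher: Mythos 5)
Your proposal is correct and takes essentially the same route as the paper: for the forward direction both use right $\square$-multiplication by $\mathbf{1}_{\dot e}$, and for the converse both exploit the right multiplier identity to factor (the image under $\iota$ of) $\rho_{T(m)}$ as $T$ composed with the bounded map $v\mapsto \iota(v)\,\square\, m$, concluding weak compactness of $\rho_{T(m)}$ and invoking Theorem \ref{4.1} (your Corollary \ref{3.7} is just its special case $\dot a=\dot e$). The only difference is that the paper suppresses the embedding $\iota$ and the descent of weak compactness from $\iota\circ\rho_{u_0}$ to $\rho_{u_0}$, which you justify via Gantmacher's theorem; this also follows directly because $\iota(A(H))$ is a norm-closed (hence weakly closed) subspace onto which $\iota$ is an isometric isomorphism.
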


\begin{proof}
	Suppose that $ H $ is discrete and consider $ u={\bf1}_{\dot{e}} \in A(H)$. Then the map $ \tilde{\rho}_u:VN(H)^*\longrightarrow VN(H)^* $ defined by $ \tilde{\rho}_u(m)=m\square u, $ is a weakly compact right multiplier of $ VN(H)^* $  with the desired properties.
	
	Conversely, first note that for each $ v \in A(H) $, we have
	$$ \rho_{T(m)}(v)= vT(m) = v\square T(m)= T(v\square  m)= T\circ \rho_m(v).$$
	Using this and the fact that the restriction of $ T $ on $ A(H) $ is weakly compact, we conclude that $ \rho_{T(m)}= T\circ \rho_m$ is weakly compact on  $ A(H)$. Now, since  $ \langle T(m),\lambda(\dot{e})\rangle\neq0$, $ H  $ must be discrete by Theorem \ref{4.1}. 
\end{proof}

\begin{remark}
	We note that the condition $ T(m)\in A(H) $	 can not be removed in Proposition \ref{4.10}. In fact, let $ m $ be a topologically invariant mean on $  VN(H)$. Then, the map $T:VN(H)^*\longrightarrow VN(H)^*$ defined by $  T (n) = n \square m =\langle n , \lambda(\dot{e})\rangle m$ is a rank one right multiplier of $ VN(H)^* $, and hence is weakly compact.
\end{remark}

\begin{proposition}\label{6.11}
	Let $ H $ be an ultraspherical hypergroup on amenable locally compact
	group $ G $ and let $ \Lambda: VN(H)\longrightarrow  C^{*}_{\lambda}(H) $ be a $ weak^*$-$weak^* $ continuous $ A(H) $-module homomorphism. Then $ \Lambda=\rho^*_{u} $ for some $ u \in A(H) $.
\end{proposition}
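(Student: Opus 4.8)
The plan is to realize $\Lambda$ as the adjoint of a multiplication operator. Since $VN(H)=A(H)^{*}$ and $B_\lambda(H)=C^{*}_{\lambda}(H)^{*}$, a $weak^{*}$-$weak^{*}$ continuous map $\Lambda\colon VN(H)\to C^{*}_{\lambda}(H)$ should be the adjoint of a bounded operator, and the point I would be careful about is that this pre-adjoint is an operator $S\colon B_\lambda(H)\to A(H)$ taking values \emph{in} $A(H)$, so that $\Lambda=S^{*}$ and $\langle \Lambda(\Phi),w\rangle=\langle \Phi,S(w)\rangle$ for all $\Phi\in VN(H)$ and $w\in B_\lambda(H)$. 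The first thing I would do is verify this identification of the pre-adjoint, keeping track of the fact that the values of $S$ land in the predual $A(H)$ of $VN(H)$.

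Next I would translate the hypothesis that $\Lambda$ is an $A(H)$-module homomorphism into a module identity for $S$. Using the two module actions together with the fact that $A(H)$ is an ideal in $B_\lambda(H)$, a short computation gives, for $a\in A(H)$, $\Phi\in VN(H)$ and $w\in B_\lambda(H)$, that $\langle\Phi,aS(w)\rangle=\langle\Lambda(a\cdot\Phi),w\rangle=\langle a\cdot\Lambda(\Phi),w\rangle=\langle\Phi,S(aw)\rangle$. Since $VN(H)=A(H)^{*}$ separates the points of $A(H)$, this yields $S(aw)=aS(w)$; that is, $S$ is an $A(H)$-module map from $B_\lambda(H)$ into $A(H)$.

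At this stage amenability enters exactly as in Proposition~\ref{5.3}: because $G$ is amenable, $1\in B_\lambda(H)$ by \cite[Theorem 3.4]{esn1}. Setting $u:=S(1)\in A(H)$ and recalling that $1$ is the identity of $B_\lambda(H)$, the module identity gives $S(v)=S(v\cdot 1)=vS(1)=vu=\rho_u(v)$ for every $v\in A(H)$. Thus $S$ restricts to $\rho_u$ on $A(H)$. Finally, for $\Phi\in VN(H)$ and $v\in A(H)$ I would note that $\langle\Lambda(\Phi),v\rangle=\langle\Phi,S(v)\rangle=\langle\Phi,vu\rangle=\langle\rho_u^{*}(\Phi),v\rangle$, and since $\Lambda(\Phi)$ and $\rho_u^{*}(\Phi)$ agree on the predual $A(H)$ they coincide in $VN(H)$. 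Hence $\Lambda=\rho_u^{*}$ with $u\in A(H)$, as required.

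The main obstacle I anticipate is the very first step: making rigorous that $weak^{*}$-$weak^{*}$ continuity produces a pre-adjoint $S$ whose range lies in $A(H)$ rather than merely in $C^{*}_{\lambda}(H)^{**}=B_\lambda(H)^{*}$. If one instead only extracts, from $\sigma(VN(H),A(H))$-continuity, a bounded multiplier $S\colon A(H)\to A(H)$ with $S^{*}(VN(H))\subseteq C^{*}_{\lambda}(H)$, then $S(1)$ is not yet meaningful and one must first extend $S$ to a module map $B_\lambda(H)\to A(H)$. That extension can be produced by an argument parallel to the proof of Proposition~\ref{4.5} (as invoked in Proposition~\ref{5.3}), passing to the adjoint $\Lambda^{*}\colon B_\lambda(H)\to A(H)^{**}$ and showing its range lies in $A(H)$ by combining the $weak^{*}$-density of $A(H)$ in $B_\lambda(H)$, the ideal property of $A(H)$ in $B_\lambda(H)$, and the norm- (hence weakly-) closedness of $A(H)$ in $A(H)^{**}$. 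Once the range is shown to land in $A(H)$, the remaining steps above are routine.
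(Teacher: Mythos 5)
Your overall skeleton (realize $\Lambda$ as an adjoint, use the module identity, and evaluate at $1\in B_\lambda(H)$, which exists by amenability and \cite[Theorem 3.4]{esn1}) has the same end-game as the paper's proof, but the step you yourself flag as the main obstacle is a genuine gap, and the patch you sketch for it does not work. Since $C^{*}_{\lambda}(H)$ is not itself a dual space, weak$^*$-weak$^*$ continuity must be read as $\sigma(VN(H),A(H))$-continuity of $\Lambda$ viewed as a map into $VN(H)$; then \cite[Theorem 3.1.11]{meg} yields a pre-adjoint $S:A(H)\to A(H)$ only. To get your operator $S:B_\lambda(H)\to A(H)$ you must prove that $\Lambda^{*}(B_\lambda(H))\subseteq A(H)$, and your proposed argument --- take $v_\alpha\in A(H)$ with $v_\alpha\to w$ weak$^*$ in $B_\lambda(H)$, note $\Lambda^{*}(v_\alpha)=S(v_\alpha)\in A(H)$, and invoke norm- (hence weak-) closedness of $A(H)$ in $A(H)^{**}$ --- fails at the last step: the density argument only gives $\Lambda^{*}(v_\alpha)\to\Lambda^{*}(w)$ in the weak$^{*}$ topology $\sigma(A(H)^{**},VN(H))$, and $A(H)$ is weak$^{*}$ \emph{dense} in $A(H)^{**}$ by Goldstine's theorem, so no amount of weak closedness can capture a weak$^{*}$ limit. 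This is precisely why Propositions \ref{4.5} and \ref{5.3} work with weak compactness: by \cite[Theorem 3.5.14]{meg}, weak compactness of the operator is exactly what upgrades weak$^{*}$ convergence of the images to weak convergence, after which norm-closedness does apply.

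The missing ingredient --- and the way the paper closes this gap --- is to first identify $S$ concretely and then extract weak compactness from the range condition. Namely: $S$ is a multiplier of $A(H)$, so by amenability of $G$ and \cite[Theorem 3.4]{esn1} one has $S=\rho_u$ for some $u\in B_\lambda(H)$; since $\rho_u^{*}(VN(H))=\Lambda(VN(H))\subseteq C^{*}_{\lambda}(H)$, the converse direction of Proposition \ref{4.5} (pairing a weak$^{*}$-convergent net against $m\in VN(H)^{*}$ through its restriction $\varphi\in B_\lambda(H)$ and using the ideal property) shows that $\rho_u^{*}$ is weak$^{*}$-weak continuous, i.e.\ $\rho_u$ is weakly compact; and then Proposition \ref{5.3} --- which is where your ``extend to $B_\lambda(H)$ and evaluate at $1$'' idea is carried out rigorously, with the weak compactness now in hand --- gives $u\in A(H)$, whence $\Lambda=S^{*}=\rho_u^{*}$. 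So your plan is salvageable, but only after inserting the multiplier theorem and the weak-compactness step; as written, the existence of the extension $S:B_\lambda(H)\to A(H)$ is unjustified, and everything after it (the module identity and $u=S(1)$) rests on that unproved claim.
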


\begin{proof}
	Since $ \Lambda $ is weak$^*$-weak$^* $ continuous,  by  \cite[Theorem 3.1.11]{meg}, there is a bounded linear operator $ S : A(H) \longrightarrow A(H) $ such that $ S^{*} =\Lambda $. It is easy to see that $  S $ is a multiplier of $ A(H) $. As $ G $ is amenable, we obtain from  \cite[ Theorem 3.4]{esn1} that $  S = \rho_u $ for some $ u \in B_{\lambda}(H) $. Now, since $ \rho_u^{*}(VN(H)) =\Lambda(VN(H))\subseteq C^{*}_{\lambda}(H) $, using arguments similar to those in the proof of Proposition \ref{4.5}, one can see that $ \rho_u $ is weakly compact. Therefore, Proposition \ref{5.3} implies that $ u \in A(H) $.
\end{proof}

\begin{proposition}\label{6.12}
	Let $ H $ be a discrete ultraspherical hypergroup on amenable locally compact
	group $ G $ and let $ \Lambda: VN(H)\longrightarrow  C^{*}_{\lambda}(H) $ be a bounded $ A(H) $-module homomorphism. Then $ \Lambda=\rho^*_{u} $ for some $ u \in A(H) $.
\end{proposition}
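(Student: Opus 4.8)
The plan is to move the module homomorphism down to the reduced $C^*$-algebra, dualize there to manufacture a multiplier of $A(H)$, and then use discreteness to upgrade a norm-dense agreement into a genuine identity of operators on all of $VN(H)$; this parallels the proof of Proposition \ref{6.11}, the point being that the missing weak$^*$-weak$^*$ continuity of $\Lambda$ is recovered from discreteness rather than assumed. First I would restrict $\Lambda$ to $C^*_\lambda(H)$. For $v\in A(H)$ and $\Psi\in C^*_\lambda(H)$ one has $v\cdot\Psi=\rho_v^*(\Psi)$; since $H$ is discrete, Theorem \ref{4.1} gives that $\rho_v$ is compact, so Proposition \ref{4.5} yields $\rho_v^*(VN(H))\subseteq C^*_\lambda(H)$, whence $v\cdot\Psi\in C^*_\lambda(H)$. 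Thus $A(H)$ acts on $C^*_\lambda(H)$ and $\Lambda_0:=\Lambda|_{C^*_\lambda(H)}:C^*_\lambda(H)\to C^*_\lambda(H)$ is a bounded $A(H)$-module homomorphism. Passing to the adjoint $\Lambda_0^*:B_\lambda(H)\to B_\lambda(H)$ (recall $C^*_\lambda(H)^*=B_\lambda(H)$), a routine duality computation using the compatibility $\langle vw,\Psi\rangle=\langle w,v\cdot\Psi\rangle$ together with the module identity $\Lambda_0(v\cdot\Psi)=v\cdot\Lambda_0(\Psi)$ shows $\Lambda_0^*(vw)=v\,\Lambda_0^*(w)$ for all $v\in A(H)$ and $w\in B_\lambda(H)$.

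Next I would produce the candidate $u$. Since $G$ is amenable, $1\in B_\lambda(H)$ by \cite[Theorem 3.4]{esn1}, so $A(H)$ has a bounded approximate identity $(e_\alpha)$ and, by Cohen's factorization theorem, $A(H)=A(H)\cdot A(H)$. Writing $v=v'v''$ with $v',v''\in A(H)$ and using $\Lambda_0^*(v)=v'\,\Lambda_0^*(v'')$ together with the fact that $A(H)$ is an ideal in $B_\lambda(H)$, one gets $\Lambda_0^*(A(H))\subseteq A(H)$. Hence $S:=\Lambda_0^*|_{A(H)}$ is a bounded operator on $A(H)$ satisfying $S(vw)=vS(w)$, i.e. a multiplier of $A(H)$, so by \cite[Theorem 3.4]{esn1} there is $u\in B_\lambda(H)$ with $S=\rho_u$.

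Then I would show $\Lambda=\rho_u^*$. On $C^*_\lambda(H)$ this is a direct unwinding of adjoints: for $\Psi\in C^*_\lambda(H)$ and $v\in A(H)$, $\langle\rho_u^*(\Psi),v\rangle=\langle\Psi,S(v)\rangle=\langle v,\Lambda_0(\Psi)\rangle=\langle\Lambda(\Psi),v\rangle$, so $\rho_u^*=\Lambda$ on $C^*_\lambda(H)$ and in particular $u\cdot\Psi=\Lambda(\Psi)\in C^*_\lambda(H)$ there. The crucial step is the passage to all of $VN(H)$. Given $\Phi\in VN(H)$, discreteness again forces $e_\alpha\cdot\Phi=\rho_{e_\alpha}^*(\Phi)\in C^*_\lambda(H)$ (Theorem \ref{4.1} and Proposition \ref{4.5}), so that $e_\alpha\cdot(u\cdot\Phi)=u\cdot(e_\alpha\cdot\Phi)=\rho_u^*(e_\alpha\cdot\Phi)=\Lambda(e_\alpha\cdot\Phi)=e_\alpha\cdot\Lambda(\Phi)$, the last equality by the module identity for $\Lambda$. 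Since $e_\alpha\cdot\xi\to\xi$ in the weak$^*$ topology of $VN(H)$ for every $\xi$, letting $\alpha$ run gives $u\cdot\Phi=\Lambda(\Phi)$, that is $\rho_u^*=\Lambda$ on $VN(H)$.

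Finally, $\rho_u^*(VN(H))=\Lambda(VN(H))\subseteq C^*_\lambda(H)$, so Proposition \ref{4.5} shows $\rho_u$ is weakly compact, and Proposition \ref{5.3} (using amenability of $G$) then gives $u\in A(H)$, which completes the argument. I expect the main obstacle to be exactly the extension from $C^*_\lambda(H)$ to $VN(H)$: mere boundedness of $\Lambda$ makes norm-density of $C^*_\lambda(H)$ insufficient, and it is discreteness that places each $e_\alpha\cdot\Phi$ back inside $C^*_\lambda(H)$, so that the module property and a weak$^*$-limit can close the gap.
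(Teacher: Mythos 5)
Your proof is correct, but it takes a genuinely different route from the paper's. The paper's proof of this proposition is essentially a citation: discreteness gives, via Theorem \ref{4.1} (equivalently Corollary \ref{3.6}), that every multiplication operator on $A(H)$ is weakly compact, i.e.\ $A(H)$ is an ideal in its bidual; since $A(H)$ is weakly sequentially complete and (by amenability of $G$) has a bounded approximate identity, the representation theorem of Baker--Lau--Pym \cite[Theorem 3.1]{blp} applies to the bounded module homomorphism $\Lambda$ and immediately yields $\Lambda=\rho_u^*$ with $u\in A(H)$. You instead re-run the proof of Proposition \ref{6.11} by hand: you restrict $\Lambda$ to $C^*_\lambda(H)$, dualize to build a multiplier $S=\rho_u$ with $u\in B_\lambda(H)$ via \cite[Theorem 3.4]{esn1}, and then use discreteness (Theorem \ref{4.1} plus Proposition \ref{4.5}) to place each $e_\alpha\cdot\Phi$ inside $C^*_\lambda(H)$, so that the bounded approximate identity and weak$^*$ limits recover the identity $\Lambda=\rho_u^*$ on all of $VN(H)$ --- exactly the weak$^*$-weak$^*$ continuity that Proposition \ref{6.11} takes as a hypothesis. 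What your approach buys: it is self-contained relative to the paper's own earlier results and avoids both weak sequential completeness of $A(H)$ and the external Baker--Lau--Pym machinery. What the paper's approach buys: brevity, and no reliance on the pairing compatibilities among $A(H)$, $B_\lambda(H)$, $C^*_\lambda(H)$ and $VN(H)$ that your duality computations invoke (though these are standard and are equally implicit in the paper's Propositions \ref{4.5}, \ref{5.3} and \ref{6.11}).

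Two small points. First, at the end you cite Proposition \ref{4.5} to get weak compactness of $\rho_u$, but that proposition is stated for $u\in A(H)$, whereas at that stage your $u$ is only known to lie in $B_\lambda(H)$; what is needed is ``arguments similar to the proof of Proposition \ref{4.5}'', which is precisely the phrasing the paper itself uses at the identical juncture in Proposition \ref{6.11}, so your argument is no less rigorous than the paper's. In fact you could streamline: once you know $\Lambda=\rho_u^*$ on all of $VN(H)$ with $\rho_u$ bounded on $A(H)$, the map $\Lambda$ is automatically weak$^*$-weak$^*$ continuous, so Proposition \ref{6.11} applies verbatim and finishes the proof. Second, the bounded approximate identity of $A(H)$ comes from amenability of $G$ (a Leptin-type theorem), not from the mere fact that $1\in B_\lambda(H)$; this is a citation quibble rather than a gap, since amenability is part of the hypothesis.
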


\begin{proof}
	Suppose that $ H $ is discrete. Then by Proposition \ref{4.1}, the multiplication in $ A(H) $ is weakly compact. Since $ A(H) $ is a weakly  sequentially complete Banach algebra which admits a bounded approximate identity, it follows from \cite[Theorem 3.1]{blp} that  $ \Lambda=\rho^*_{u} $ for some $ u \in A(H) $.
\end{proof}

Next, we show that $ \mathcal{Z}(A(H)^{**} , \square) $ is invariant under the weak$ ^*$-weak$ ^* $  continuous elements  of $LM (A(H)^{**} , \square) $. 
For any $ m \in A(H)^{**} $, let
$ L_m: A(H)^{**}\longrightarrow A(H)^{**} $ be the left multiplication operator defined by $L_m(n) = m \square n $.

\begin{proposition}\label{5.7}
	Let  $ T $ be a   weak$ ^*$-weak$ ^* $  continuous element  of $LM (A(H)^{**} , \square) $. Then $T (\mathcal{Z}(A(H)^{**} , \square))\subseteq \mathcal{Z}(A(H)^{**} , \square)  $. 
\end{proposition}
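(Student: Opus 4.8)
The plan is to unwind the definition of the left topological center and reduce the assertion to the weak$^*$-weak$^*$ continuity of a single map. Fix $m \in \mathcal{Z}(A(H)^{**},\square)$. By the definition of $\mathcal{Z}(A(H)^{**},\square)$ given above, proving $T(m)\in\mathcal{Z}(A(H)^{**},\square)$ amounts to showing that the map
\[
n\longmapsto n\square T(m)
\]
is weak$^*$-weak$^*$ continuous on $A(H)^{**}$. So from the outset I discard the functional-analytic generality and aim only at this one continuity statement.

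The engine of the proof is the pull-out identity
\[
n\square T(m)=T\bigl(n\square m\bigr)\qquad(n\in A(H)^{**}),
\]
which I expect to extract from the hypothesis that $T$ is a weak$^*$-weak$^*$ continuous multiplier of $(A(H)^{**},\square)$ together with the associativity of the Arens product. Granting it, the conclusion is immediate and formal: since $m\in\mathcal{Z}(A(H)^{**},\square)$, the map $n\mapsto n\square m$ is weak$^*$-weak$^*$ continuous by the very definition of the topological center, and $T$ is weak$^*$-weak$^*$ continuous by hypothesis; hence the composite $n\mapsto T(n\square m)=n\square T(m)$ is weak$^*$-weak$^*$ continuous, which is exactly the assertion $T(m)\in\mathcal{Z}(A(H)^{**},\square)$. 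Thus the whole weight of the argument is shifted onto justifying the displayed identity, after which no further estimate is needed.

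To establish the identity I would first check it on the dense subalgebra $A(H)$, where the multiplier condition on $T$ combined with the commutativity of $A(H)$ allows me to transport $T$ across the product freely, and then propagate it to all of $A(H)^{**}$ by approximating $n$ (and, if required, $m$) weak$^*$ by elements of $A(H)$ and invoking the separate weak$^*$ continuity of $\square$ in the relevant variable alongside the weak$^*$ continuity of $T$. The step I expect to be delicate is the bookkeeping of sides: because $(A(H)^{**},\square)$ is noncommutative, the multiplier identity frees a factor on one specific side only, and I must make sure that it is precisely the side for which the topological center demands continuity (equivalently, the frozen factor must be the one in which $\square$ is automatically weak$^*$ continuous). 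Keeping the left/right roles of $n$, $m$, and $T(m)$ consistent throughout the approximation is where the care lies; once $n\square T(m)=T(n\square m)$ is correctly justified, the continuity conclusion, and hence the invariance of $\mathcal{Z}(A(H)^{**},\square)$ under $T$, follows at once.
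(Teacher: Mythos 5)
Your proposal and the paper's proof are structurally parallel --- both reduce the claim to writing the relevant multiplication map as a composition of $T$ with a continuous multiplication map --- but they land on opposite sides of the Arens product, and the difference is substantive. The paper's proof is essentially one line on the \emph{other} side: setting $L_m(n) = m \square n$, the left-multiplier hypothesis gives verbatim
\begin{equation*}
L_{T(m)}(n) \;=\; T(m)\square n \;=\; T(m\square n) \;=\; (T\circ L_m)(n),
\end{equation*}
so $L_{T(m)} = T\circ L_m$ is weak$^*$-weak$^*$ continuous as a composition; no density argument, no commutativity of $A(H)$, nothing else enters. You instead took the paper's displayed definition of $\mathcal{Z}(A(H)^{**},\square)$ at face value (continuity of $n\mapsto n\square m$), and then the identity you need, $n\square T(m) = T(n\square m)$, is the \emph{right}-multiplier identity, which genuinely does not follow formally from $T\in LM(A(H)^{**},\square)$. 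You correctly flagged this as the delicate point, and your sketched repair does work: for $u\in A(H)$, commutativity of $A(H)$ makes $u$ central in $(A(H)^{**},\square)$, so $u\square T(m) = T(m)\square u = T(m\square u) = T(u\square m)$; then approximate a general $n$ weak$^*$ by a bounded net in $A(H)$ and pass to the limit, using the automatic weak$^*$ continuity of $\square$ in the frozen-second-factor variable together with the weak$^*$ continuity of $T$. (A pleasant by-product of your route: every weak$^*$-weak$^*$ continuous left multiplier of $(A(H)^{**},\square)$ is automatically a right multiplier when the underlying algebra is commutative.)

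The caveat you should be aware of is that the paper's preliminaries are internally inconsistent, and your proof inherits the inconsistency. With the paper's own formula $\langle m\square n,\Phi\rangle = \langle m, n\cdot\Phi\rangle$, it is the map $n\mapsto n\square m$ (variable in the first slot) that is \emph{automatically} weak$^*$-weak$^*$ continuous for every $m$, while $n\mapsto m\square n$ can fail to be; the paper states the opposite. Read literally, the displayed $\mathcal{Z}(A(H)^{**},\square)$ is therefore all of $A(H)^{**}$, the proposition is vacuous, and your pull-out identity is not even needed to prove it. Read as intended --- and as the paper's own proof reads it, since that proof asserts that $m\in\mathcal{Z}$ makes $L_m$ continuous --- membership in $\mathcal{Z}$ means continuity of $n\mapsto m\square n$, and then what must be shown about $T(m)$ is continuity of $n\mapsto T(m)\square n$, not of $n\mapsto n\square T(m)$; your identity addresses the wrong side for that statement, whereas the correct side needs nothing beyond the multiplier hypothesis itself. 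So your argument is internally sound and the sketch is fillable, but you should rebase it on the intended definition of the center, where it collapses to the paper's two-line composition.
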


\begin{proof}
	Let $ m \in \mathcal{Z}(A(H)^{**} , \square) $. Then $ L_m $ is weak$ ^*$-weak$ ^* $  continuous. It is easy  to see that $L_{T(m)}=T\circ L_m  $, hence the mapping $ L_{T(m)} $ is weak$ ^*$-weak$ ^* $ continuous. Therefore, $ T(m)\in \mathcal{Z}(A(H)^{**} , \square) $.
\end{proof}

\begin{corollary}\label{5.8}
	Let $ T $ be a   weak$ ^*$-weak$ ^* $-continuous weakly compact   element  of  $LM (A(H)^{**} , \square) $. Then $T (A(H)^{**} , \square)\subseteq \mathcal{Z}(A(H)^{**} , \square)  $.
\end{corollary}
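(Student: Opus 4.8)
The plan is to establish that $T(m)\in\mathcal{Z}(A(H)^{**},\square)$ for every $m\in A(H)^{**}$, thereby upgrading the inclusion in Proposition \ref{5.7} from the center to the whole bidual. I would begin by recording that the canonical image of $A(H)$ already lies in the topological center: for $u\in A(H)$ one has $\langle u\square n,\Phi\rangle=\langle n,\Phi\cdot u\rangle$ for all $\Phi\in VN(H)$, and since $\Phi\cdot u\in A(H)^{*}$ this is weak$^*$ continuous in $n$, so $L_u$ is weak$^*$-weak$^*$ continuous and $u\in\mathcal{Z}(A(H)^{**},\square)$. Hence $A(H)\subseteq\mathcal{Z}(A(H)^{**},\square)$, and Proposition \ref{5.7} yields $T(A(H))\subseteq T(\mathcal{Z}(A(H)^{**},\square))\subseteq\mathcal{Z}(A(H)^{**},\square)$. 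As $\mathcal{Z}(A(H)^{**},\square)$ is a subspace, by homogeneity it suffices to treat $m$ with $\|m\|\le1$.

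Next I would approximate. By Goldstine's theorem there is a net $(u_\alpha)$ in the closed unit ball of $A(H)$ with $u_\alpha\to m$ weak$^*$ in $A(H)^{**}$. Since $T$ is weak$^*$-weak$^*$ continuous, $T(u_\alpha)\to T(m)$ weak$^*$. Because $(u_\alpha)$ is bounded and $T$ is weakly compact, the family $\{T(u_\alpha)\}$ is relatively weakly compact, so a subnet $(T(u_{\alpha_\beta}))$ converges weakly to some $y\in A(H)^{**}$. Weak convergence forces weak$^*$ convergence, so this subnet converges weak$^*$ both to $y$ and to $T(m)$; uniqueness of weak$^*$ limits gives $y=T(m)$. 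Thus $T(m)$ is a weak limit of the terms $T(u_{\alpha_\beta})$, each of which lies in $\mathcal{Z}(A(H)^{**},\square)$ by the first paragraph.

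To conclude I would invoke that $\mathcal{Z}(A(H)^{**},\square)$ is norm-closed: it is the preimage, under the bounded linear map $m\mapsto L_m$, of the set of weak$^*$-weak$^*$ continuous operators, and the latter is precisely $\{S^{*}:S\in B(VN(H))\}$, a norm-closed subspace of $B(A(H)^{**})$ since $S\mapsto S^{*}$ is isometric. Being a norm-closed convex set, $\mathcal{Z}(A(H)^{**},\square)$ is weakly closed by Mazur's theorem, so the weak limit $T(m)$ belongs to it, which completes the proof. I expect the crux to be exactly this weak$^*$-to-weak passage: weak$^*$ density of $A(H)$ together with the weak$^*$-weak$^*$ continuity of $T$ alone only places $T(m)$ in the weak$^*$ closure of $\mathcal{Z}(A(H)^{**},\square)$, which in general strictly contains the center; it is the weak compactness of $T$ that promotes weak$^*$ convergence to weak convergence along a subnet and thereby confines $T(m)$ to the weak closure of the center, which does equal $\mathcal{Z}(A(H)^{**},\square)$.
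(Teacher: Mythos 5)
Your proof is correct and follows essentially the same route as the paper: weak$^*$ density of the center in $A(H)^{**}$ (via Goldstine), the upgrade from weak$^*$ to weak convergence supplied by weak compactness of $T$, Proposition \ref{5.7}, and Mazur's theorem. The only difference is one of detail, not strategy: you verify by hand (bounded nets, a subnet extracted by weak compactness, and an explicit proof that $\mathcal{Z}(A(H)^{**},\square)$ is norm-closed) what the paper obtains by citing Megginson's theorems on weak$^*$-weak continuity of weakly compact adjoint operators and leaves implicit.
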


\begin{proof}
	
	Since $ T $ is  weak$ ^*$-weak$ ^* $ continuous and weakly compact, it follows from \cite[Theorem 3.1.11,  Theorem 3.5.14]{meg} that $ T $ is  weak$ ^*$-weak  continuous. Using the weak$ ^*$ density of $ \mathcal{Z}(A(H)^{**} , \square) $ in $A(H)^{**}  $, we conclude that  $ T(A(H)^{**}  )\subseteq T(\mathcal{Z}(A(H)^{**}, \square )^{-w} $. Hence, by  Proposition \ref{5.7} and Mazur's theorem, $T (A(H)^{**} , \square)\subseteq \mathcal{Z}(A(H)^{**} , \square)  $.
\end{proof}

\section{Arens regularity of closed ideals }
In this section, we study  Arens regularity of the closed ideals of $ A(H) $. 
We obtain results similar to those of Forrest \cite{for0} obtained in the group setting.

\begin{proposition}\label{f1}
	Let $ I $ be a closed ideal in $ A(H) $ such that $ \dot{e} \notin Z(I)$. If $ I $ is Arens regular, then $ VN(H) $ has a unique topologically invariant mean. In particular, $ H $ is discrete.
\end{proposition}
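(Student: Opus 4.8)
The plan is to prove the substantive assertion first — uniqueness of the topologically invariant mean — and then to read off discreteness from the known fact that a non-discrete ultraspherical hypergroup carries more than one topologically invariant mean on $VN(H)$. The conceptual engine is the observation that, for a \emph{commutative} Banach algebra, Arens regularity is equivalent to commutativity of the first Arens product on the bidual; this lets me collapse the whole statement into a one-line computation with $\square$ applied to a pair of invariant means.

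First I would use the hypothesis $\dot e \notin Z(I)$ to fix $v \in I$ with $v(\dot e)=1$, and show that every $m \in TIM(\widehat H)$ in fact lies in the second dual of the ideal, i.e.\ in $I^{\perp\perp}\subseteq A(H)^{**}$. The key point is that $v\cdot\Phi = 0$ in $VN(H)$ for every $\Phi \in I^{\perp}$: indeed $\langle v\cdot\Phi, u\rangle = \langle \Phi, vu\rangle = 0$ for all $u\in A(H)$, since $vu\in I$ (because $I$ is an ideal) and $\Phi$ annihilates $I$. Feeding this into the invariance identity $\langle m, v\cdot\Phi\rangle = v(\dot e)\langle m,\Phi\rangle$ gives $\langle m,\Phi\rangle = 0$, so $m$ annihilates $I^{\perp}$ and thus belongs to $I^{\perp\perp}$. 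Under the canonical identification $I^{**}\cong I^{\perp\perp}$ the intrinsic first Arens product of $I$ is the restriction of the first Arens product of $A(H)^{**}$, so Arens regularity of $I$ becomes a statement about products of elements of $I^{\perp\perp}$.

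The second step exploits commutativity. Since $I$ is commutative, its two Arens products are related by $p\diamond q = q\square p$, so Arens regularity of $I$ is equivalent to the first Arens product being \emph{commutative} on $I^{**}\cong I^{\perp\perp}$. Now take any two means $m_1,m_2\in TIM(\widehat H)\subseteq I^{\perp\perp}$. Using the formula $n\square m = \langle n,\lambda(\dot e)\rangle\, m$, valid for every invariant mean $m$ (as recorded in the Remark following Proposition \ref{4.10}), together with $\langle m_i,\lambda(\dot e)\rangle = 1$, I compute $m_1\square m_2 = m_2$ and $m_2\square m_1 = m_1$. Commutativity of $\square$ on $I^{\perp\perp}$ then forces $m_1 = m_2$, proving that the invariant mean is unique. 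Finally, discreteness follows because a non-discrete $H$ would admit more than one topologically invariant mean on $VN(H)$ (the ultraspherical-hypergroup counterpart of the classical group result), contradicting uniqueness.

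The routine-but-delicate points I expect to spend care on are the two identifications in the second paragraph: that the natural isomorphism $I^{**}\to I^{\perp\perp}$ carries the intrinsic first Arens product of $I$ to the restriction of the first Arens product of $A(H)^{**}$, and the flip relation $p\diamond q = q\square p$ for commutative $I$ (so that regularity genuinely becomes commutativity). The only genuinely external input — the step that is \emph{not} self-contained within the tools developed above — is the implication ``unique invariant mean $\Rightarrow$ discrete,'' and this is where I expect the main obstacle to lie; I would supply it by invoking the hypergroup analogue of the fact that the von Neumann algebra of a non-discrete object possesses a large supply of topologically invariant means.
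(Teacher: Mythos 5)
Your proposal is correct and follows essentially the same route as the paper: embed $TIM(\widehat H)$ into $I^{\perp\perp}\cong I^{**}$ via the annihilation argument with $v(\dot e)=1$, compute $m_1\square m_2=m_2$ using the invariance formula, use commutativity of $\square$ on $I^{**}$ (from Arens regularity of the commutative algebra $I$) to force $m_1=m_2$, and then invoke Kumar's theorem (\cite[Theorem 1.7]{kum2} in the paper) for the implication from uniqueness of the invariant mean to discreteness. The only cosmetic differences are that you cite the Remark after Proposition \ref{4.10} for the identity $n\square m=\langle n,\lambda(\dot e)\rangle m$ where the paper derives it inline, and that you explicitly flag the compatibility of Arens products under $I^{**}\cong I^{\perp\perp}$, a standard point the paper passes over silently.
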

\begin{proof}
	Let $ m \in TIM(\widehat{H}) $ and let $ \Phi \in I^{\perp}. $ Take $ u\in I $ with $ u(\dot{e})=1. $ Then for each $ v \in A(H) $, we have
	$$\langle u\cdot \Phi , v\rangle=\langle  \Phi , uv\rangle= 0. $$
	Therefore, $ u\cdot \Phi =0, $  and then
	$$\langle m , \Phi \rangle=u(\dot{e}) \langle  m , \Phi \rangle= \langle  m ,u\cdot \Phi \rangle = 0. $$
	Hence, $ m \in I^{\perp \perp}$. Identifying $ I^{\perp \perp} $ with $ I^{**} $,  we get that $ TIM(\widehat{H}) \subseteq I^{**}$.
	Suppose  now that $ m_{1}, m_{2} \in TIM(\widehat{H}). $ Then  
	$$\langle  m_{2}\square \Phi , v\rangle=  v(\dot{e})\langle m_{2}, \Phi \rangle =\langle m_{2}, \Phi \rangle \langle \lambda(\dot{e}) , v \rangle \quad(v \in A(H)). $$
	Hence, for each $\Phi \in VN(H), $ we have
	$$ \langle m_{1}\square m_{2}, \Phi \rangle=\langle m_{1} ,m_{2} \square \Phi \rangle= \langle m_{1}, \lambda(\dot{e}) \rangle\langle m_{2} \square \Phi \rangle= \langle m_{2} , \Phi \rangle.$$
	This shows  that $ m_{1}\square m_{2}=m_{2}. $
	Finally, since $ I $  is Arens regular, $I^{**}$ is commutative and hence,
	$$m_{2} = m_{1}\square m_{2}=m_{2}\square m_{1}=m_{1}.$$ 
	The last statement  follows from \cite[ Theorem 1.7]{kum2}.
\end{proof}

\begin{proposition}\label{f2}
	Let $ I $ be a  closed ideal in $ A(H) $ with $ \dot{e} \notin Z(I)$. If $ I $ has a bounded approximate identity, then $ I  $ is Arens regular if and only if $ I $ is reflexive.
\end{proposition}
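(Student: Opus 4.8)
The plan is to prove the two implications separately; the reverse one is routine and the forward one carries all the weight.

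\emph{Reflexivity implies Arens regularity.} If $I$ is reflexive then the canonical embedding $I \hookrightarrow I^{**}$ is surjective, so there is nothing to extend: both Arens products on $I^{**}$ reduce to the original multiplication of $I$ and hence coincide. Thus $I$ is Arens regular. For the converse I would first convert Arens regularity into the multiplier information supplied by the earlier sections. Since $\dot{e} \notin Z(I)$ and $I$ is Arens regular, Proposition \ref{f1} gives that $H$ is discrete. Discreteness activates the theory of Section 4: by Theorem \ref{4.1} every multiplication operator $\rho_u$ on $A(H)$ is (weakly) compact, and restricting to the closed ideal $I$ shows that $\rho_u\colon I \to I$ is weakly compact for every $u \in I$; equivalently, $I$ is an ideal in its bidual $I^{**}$. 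I would also record two soft facts that hold independently of Arens regularity: $I$ has a bounded approximate identity by hypothesis, and $I$ is weakly sequentially complete, being a closed subspace of $A(H)$, which is the predual of the von Neumann algebra $VN(H)$.

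At this point the statement becomes purely Banach-algebraic: a weakly sequentially complete, Arens regular Banach algebra $\mathcal A$ (namely $I$) that possesses a bounded approximate identity and is an ideal in $\mathcal A^{**}$ must be reflexive. Two standard reductions organize the argument. On the geometric side, since $\mathcal A$ is weakly sequentially complete, Rosenthal's $\ell^1$-theorem together with the Eberlein--\v{S}mulian theorem shows that $\mathcal A$ is reflexive \emph{if and only if} $\mathcal A$ contains no isomorphic copy of $\ell^1$. On the algebraic side, combining the bounded approximate identity with Arens regularity, $\mathcal A^{**}$ is unital, with identity $E$ a weak$^*$-cluster point of the approximate identity; and if one can place $E$ inside $\mathcal A$, then for every $m \in \mathcal A^{**}$ one has $m = E \square m \in \mathcal A \square \mathcal A^{**} \subseteq \mathcal A$, forcing $\mathcal A = \mathcal A^{**}$.

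The crux, and the step I expect to be the main obstacle, is to rule out a copy of $\ell^1$ in $\mathcal A$ (equivalently, to show $E \in \mathcal A$). This is exactly where Arens regularity must be used in an essential way, and it cannot be dropped: $L^1(G)$ for an infinite compact group $G$ is a weakly sequentially complete ideal in its bidual admitting a bounded approximate identity, yet is neither Arens regular nor reflexive. Concretely, I would assume a normalized sequence in $\mathcal A$ equivalent to the $\ell^1$-basis and feed it, together with the approximate identity, into Grothendieck's iterated-limit criterion for Arens regularity, using the ideal-in-bidual structure to handle the inner limits; the delicate point is that a subsequence extracted from the approximate-identity net need not itself be an approximate identity, so the absorption $u e_n \to u$ must be replaced by the weak compactness of the operators $\rho_u$. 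Should this direct estimate prove unwieldy, the same conclusion is available from the structural results for weakly sequentially complete Banach algebras with bounded approximate identities in the spirit of \cite{blp} and of Forrest \cite{for0}, which deliver reflexivity directly once Arens regularity and the ideal-in-bidual property are in hand.
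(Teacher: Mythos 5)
Your reduction coincides with the paper's up to the decisive step: the easy direction, then Proposition \ref{f1} to get discreteness of $H$, then weak sequential completeness of $I$ and the fact that $I$ is an ideal in $I^{**}$. (Your route to the ideal-in-bidual property, restricting the weakly compact operators $\rho_u$, $u\in I$, furnished by Theorem \ref{4.1}, is a harmless variant of the paper's, which instead uses Cohen's factorization theorem to write $I=I^2$ and then the fact that discreteness makes $A(H)$ an ideal in $A(H)^{**}$.) The genuine gap is exactly the step you flag as ``the crux'': the purely Banach-algebraic implication (weakly sequentially complete $+$ bounded approximate identity $+$ ideal in bidual $+$ Arens regular $\Rightarrow$ reflexive) is never actually proved. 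The Grothendieck double-limit argument you sketch is not carried out, and you yourself concede it may fail as described; in substance it would amount to reproving \"Ulger's theorem from scratch, which is a nontrivial piece of work rather than a routine estimate. The fallback appeal to ``structural results in the spirit of \cite{blp} and \cite{for0}'' names no specific theorem, so as written the proof stops precisely where all the hypotheses have to interact.

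The paper closes this gap with a precise citation and a different finish. Having arranged $I=I^2$, the ideal property, and weak sequential completeness, it invokes \cite[Theorem 2.9.39]{dal} to conclude that $I$ is \emph{unital}; it then argues concretely rather than via your ``$E\in I$ plus ideal-in-bidual'' scheme: the unit of $I$ is an idempotent in $C_0(H)$, and on a discrete $H$ an idempotent in $C_0(H)$ has finite support, so $I$ is finite-dimensional and in particular reflexive. Your plan is completable: your fallback becomes a proof if you cite the actual topological-centre theorem of Baker--Lau--Pym \cite{blp} (for a weakly sequentially complete Banach algebra with a bounded approximate identity which is an ideal in its bidual, the topological centre of the bidual is the algebra itself), since Arens regularity of the commutative algebra $I$ then forces $I^{**}=\mathcal{Z}(I^{**},\square)=I$. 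But some such named theorem must be placed at that spot; without it, the argument has a hole at its only hard point, and the paper's version also yields the stronger intermediate conclusion that $I$ is finite-dimensional, which your approach would not give directly.
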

\begin{proof}
	If $ I $ is reflexive, then $ I $ is obviously Arens regular.
	
	For the converse, suppose that  $ I $ is Arens regular. Then, it follows from Proposition \ref{f1} that $ H $ is discrete.
	Since $ I $ has a bounded approximate identity, by  Cohen's
	factorization theorem,  $ I=I^2. $ Let $ \iota: A(H) \longrightarrow A(H)^{**}$ be the natural embedding. Then, we have
	$$\iota(I) \circ I^{**} = \iota(I \cdot I) \circ I^{\perp \perp}\subseteq \iota(I) \circ (\iota(I)\circ A(H)^{**})\subseteq I \cdot A(H) \subseteq I.$$
	Therefore, $ I $ is an ideal in $ I^{**} $. On the other hand, since $ A(H) $ is weakly sequentially complete, $ I $ is also weakly sequentially complete. It follows from  \cite[ Theorem 2.9.39]{dal} that $ I $ is unital.  The inclusion $ I \subseteq C_0(H) $ and discreteness of $ H $ implies that $ I $ is finite dimensional and hence $ I $ is reflexive.
\end{proof}

In the following result, we denote the set of all weakly compact multipliers of $ A(H)$  by $ M_{wc}(A(H)) $. 
\begin{corollary}
	Let $ H $ be an ultraspherical hypergroup on amenable locally compact
	group $ G $. Then  $ M_{wc}(A(H))= B_\lambda(H) $ if and only if $ H $ is finite. 
\end{corollary}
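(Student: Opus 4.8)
The plan is to read the hypothesis through the lens of the multiplier representation already used in this section: since $G$ is amenable, every multiplier of $A(H)$ has the form $\rho_u$ for a unique $u \in B_\lambda(H)$, so that the identification $M_{wc}(A(H)) = B_\lambda(H)$ is to be understood as the assertion that $\rho_u$ is weakly compact for \emph{every} $u \in B_\lambda(H)$. With this reading the corollary becomes a synthesis of Theorem \ref{4.1} (via Corollary \ref{3.7}) and Proposition \ref{5.3}, and the remaining work is to extract from "all multipliers are weakly compact'' both discreteness and compactness of $H$.

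The forward implication is immediate and I would dispatch it first: if $H$ is finite then $A(H)$ is finite dimensional, so every multiplier has finite rank and is in particular weakly compact; hence $M_{wc}(A(H))$ is the whole multiplier algebra, which equals $B_\lambda(H)$ by amenability.

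For the converse I would argue in three steps. First, because $A(H) \subseteq B_\lambda(H)$, the hypothesis already gives that $\rho_u$ is weakly compact for every $u \in A(H)$; since $A(H)$ is regular I may pick $u \in A(H)$ with $u(\dot{e}) \neq 0$, and Corollary \ref{3.7} then forces $H$ to be discrete. Second, taking now an \emph{arbitrary} $u \in B_\lambda(H)$, its multiplier $\rho_u$ is weakly compact by hypothesis, so Proposition \ref{5.3} puts $u \in A(H)$; as the inclusion $A(H) \subseteq B_\lambda(H)$ is automatic, this yields $B_\lambda(H) = A(H)$. Third, since $G$ is amenable we have $1 \in B_\lambda(H)$ by \cite[Theorem 3.4]{esn1}, so the constant function $1$ lies in $A(H) \subseteq C_0(H)$, which forces $H$ to be compact. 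A hypergroup that is both discrete and compact is finite, completing the argument.

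There is no single deep obstacle here; the point requiring care is that the weak compactness assumption must be invoked twice, in two genuinely different directions. On $A(H)$ itself it delivers discreteness through Corollary \ref{3.7}, while on all of $B_\lambda(H)$ it collapses $B_\lambda(H)$ onto $A(H)$ through Proposition \ref{5.3}. Amenability is the hinge used in both places: it realises every multiplier as some $\rho_u$ and it guarantees $1 \in B_\lambda(H)$, and it is precisely this membership of the constant function that converts the equality $A(H) = B_\lambda(H)$ into compactness. The only minor verification I would not skip is that $A(H) \subseteq C_0(H)$, which is what makes the presence of $1$ incompatible with non-compactness.
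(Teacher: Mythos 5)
Your proof is correct, but the second half of the converse takes a genuinely different route from the paper. Both arguments begin identically: since $A(H)\subseteq B_\lambda(H)=M_{wc}(A(H))$, some $u\in A(H)$ with $u(\dot e)\neq 0$ (regularity) has $\rho_u$ weakly compact, so $H$ is discrete by Theorem \ref{4.1} (equivalently Corollary \ref{3.7}). From there you apply the hypothesis to \emph{every} $u\in B_\lambda(H)$ and invoke Proposition \ref{5.3} to collapse $B_\lambda(H)$ onto $A(H)$; then amenability gives $1\in B_\lambda(H)=A(H)\subseteq C_0(H)$, forcing compactness, and discrete plus compact yields finite. The paper instead applies the hypothesis only to the single element $1\in B_\lambda(H)$: weak compactness of $\rho_1$ (the identity operator) means the closed unit ball of $A(H)$ is weakly compact, so $A(H)$ is reflexive, hence Arens regular; then \cite[Theorem 2.9.39]{dal} (via the mechanism of Proposition \ref{f2}) shows $A(H)$ is unital, and unitality inside $C_0(H)$ gives compactness and finally finiteness. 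Your argument is the more elementary one: it avoids reflexivity, Arens regularity, and Dales' theorem altogether, and it puts Proposition \ref{5.3} to work (the paper proves that proposition but does not use it here). The paper's route is more economical in what it extracts from the hypothesis --- it only needs the constant function $1$ to act weakly compactly, not all of $B_\lambda(H)$ --- and it records the stronger intermediate fact that $A(H)$ is reflexive, which ties this corollary to the Arens-regularity theme of the final section. Both uses of amenability (identifying $M(A(H))$ with $B_\lambda(H)$ via \cite[Theorem 3.4]{esn1}, and placing $1$ in $B_\lambda(H)$) are common to the two proofs and are indeed indispensable.
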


\begin{proof}
	Suppose that $ M_{wc}(A(H))= B_\lambda(H) $. Since $ A(H) \subseteq B_\lambda(H) $, we obtain from Theorem \ref{4.1} that $ H $  is discrete. On the other hand,  $ M(A(H))= B_\lambda(H) $ by \cite[Theorem 3.4]{esn1},  which implies that $ 1 \in B_\lambda(H) $ and hence the set $\{ u \cdot 1 : \lVert u\rVert_{A(H)}\leqslant 1\} $ in $A(H)$  is weakly compact. This means that $ A(H) $  is reflexive. Therefore,  $ A(H) $ is Arens regular by Proposition \ref{f2}. This and   \cite[Theorem 2.9.39]{dal} imply that $ A(H) $ is unital. Finally, since $ A(H) \subseteq C_0(H) $ separates the points of  $ H $, we conclude that $ H $ is finite. The converse is trivial.
\end{proof}

We end this paper with the following questions.\\


1-If $ T :VN(H)^*\longrightarrow VN(H)^*$ is a weakly compact left multiplier such that $ \langle T(n) , \lambda(\dot{e})\rangle \neq 0$ for some $ n \in VN(H)^* $, must $H$ be discrete? In the group setting, Ghahramani and Lau in \cite[Theorem 4.3]{ghl} have given a positive  answer to this question.\\

2-If  there is a non-zero $ u \in A(H) $ such that $ \rho_u $ is (weakly)  compact, must $H$ be discrete?


\bibliographystyle{amsplain}

\end{document}